\newtheorem{thm}{Theorem}[section]
\newtheorem{prop}[thm]{Proposition}
\newtheorem{lem}[thm]{Lemma}
\newtheorem{cor}[thm]{Corollary}
\newtheorem{rem}[thm]{Remark}
\newtheorem{exe}{Example}[section]
\DeclareMathOperator{\bR}{\mathbb{R}}
\DeclareMathOperator{\bZ}{\mathbb{Z}}
\DeclareMathOperator{\bfA}{\mathbf{A}}
\DeclareMathOperator{\bfS}{\mathbf{S}}
\DeclareMathOperator{\cA}{\mathcal{A}}
\DeclareMathOperator{\cH}{\mathcal{H}}
\DeclareMathOperator{\cO}{\mathcal{O}}
\DeclareMathOperator{\cP}{\mathcal{P}}
\DeclareMathOperator{\Diff}{\text{Diff}}
\DeclareMathOperator{\TWPFH}{\text{TwPFH}}
\DeclareMathOperator{\TWPFC}{\text{TwPFC}}
\begin{document}
\title[Generic equidistribution]{Generic equidistribution of periodic orbits for area-preserving surface maps}
\author{Rohil Prasad} 
\begin{abstract} We prove that a $C^\infty$-generic area-preserving diffeomorphism of a closed, oriented surface admits a sequence of equidistributed periodic orbits. This is a quantitative refinement of the recently established generic density theorem for area-preserving surface diffeomorphisms. The proof has two ingredients. The first is a ``Weyl law'' for PFH spectral invariants, which was used to prove the generic density theorem. The second is a variational argument inspired by the work of Marques--Neves--Song and Irie on equidistribution results for minimal hypersurfaces and three-dimensional Reeb flows, respectively. 
\end{abstract}

\maketitle

\section{Introduction} \label{sec:intro}

\subsection{Statement of main results} A fundamental question concerning a dynamical system is to describe in as much detail as possible its set of periodic orbits. This work pursues this line of inquiry for area-preserving diffeomorphisms of smooth, closed surfaces. For the remainder of this paper, we fix a closed, smooth, connected, oriented surface $\Sigma$ and an area form $\omega$ on $\Sigma$ of area $1$. We show in this article that for an abundance of area-preserving diffeomorphisms of $\Sigma$, the set of periodic orbits has a rich topological and statistical structure. We find for each of these diffeomorphisms a countable set of periodic orbits which not only has union dense in $\Sigma$, but in some sense evenly fills out the entire surface, a phenomenon known as ``equidistribution''. This is stated formally in the following theorem. 

\begin{thm} \label{thm:generic}
A $C^\infty$-generic area-preserving diffeomorphism of $\Sigma$ has an equidistributed sequence of orbit sets. 
\end{thm}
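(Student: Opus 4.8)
The plan is to combine the two advertised ingredients through a Baire-category scheme of the type used by Marques--Neves--Song and Irie. Fix a countable family $\{f_i\}_{i\ge 1}$ of nonnegative smooth functions that is $C^0$-dense in the cone of nonnegative functions; then a sequence of orbit sets equidistributes with respect to $\omega$ precisely when, for every $i$, the $f_i$-averages of the associated normalized measures converge to $\int_\Sigma f_i\,\omega$. For $j,D,m\in\mathbb N$, let $\mathcal U_{j,D,m}$ be the set of area-preserving diffeomorphisms $\phi$ admitting a nondegenerate orbit set $\Theta$ of degree $d\ge D$ with
\[
\sum_{i\le j}\Bigl|\int_\Sigma f_i\,d(\widehat\mu_\Theta/d)-\int_\Sigma f_i\,\omega\Bigr| < \frac{1}{m},
\]
where $\widehat\mu_\Theta$ is the natural measure of an orbit set (the multiplicity-weighted sum of counting measures along its constituent periodic orbits, of total mass $d$). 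If each $\mathcal U_{j,D,m}$ is open and dense, then on the residual set $\bigcap_{j,D,m}\mathcal U_{j,D,m}$ one extracts a single equidistributed sequence of orbit sets --- for each $j$ choose an orbit set witnessing membership in $\mathcal U_{j,j,j}$ --- which proves Theorem~\ref{thm:generic}. Openness of $\mathcal U_{j,D,m}$ on the (open, dense) nondegenerate locus is routine, using continuity of the PFH spectral invariants and persistence of nondegenerate periodic orbits, so the work is to prove \emph{density}.

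\textbf{Density via the variational argument.} Fix $\phi$ (nondegenerate, after a preliminary perturbation), fix $j,D,m$, and let $\mathcal N$ be a prescribed $C^\infty$-neighborhood. I would deform $\phi$ through the composed Hamiltonian family $\phi_{\vec s}:=\phi^1_{s_jf_j}\circ\cdots\circ\phi^1_{s_1f_1}\circ\phi$, $\vec s\in[0,\delta]^j$, with $\delta$ small so that $\phi_{\vec s}\in\mathcal N$. Since each $f_i\ge 0$, the map $\phi_{\vec s+te_k}$ differs from $\phi_{\vec s}$ by composition with the time-$t$ flow of a nonnegative (conjugated) Hamiltonian, so by monotonicity of PFH spectral invariants $c_d(\phi_{\vec s})$ is nondecreasing in each $s_k$. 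By the PFH Weyl law, $c_d(\phi_{\vec s})/d$ converges pointwise, as $d\to\infty$, to a continuous limit $\mathcal A(\phi_{\vec s})$; because the leading term of the Weyl law transforms under Hamiltonian composition like the Calabi invariant, $\mathcal A(\phi_{\vec s})$ is affine in $\vec s$ with $\partial_{s_i}\mathcal A=\int_\Sigma f_i\,\omega$. Monotonicity in each coordinate together with pointwise convergence to a continuous limit upgrades this to \emph{uniform} convergence on $[0,\delta]^j$ (Dini). On the other hand $c_d(\phi_{\vec s})$ is Lipschitz in $\vec s$, hence differentiable a.e.; at a.e.\ parameter $c_d(\phi_{\vec s})$ is realized by a unique nondegenerate orbit set $\Theta_{d,\vec s}$, and a first-variation (envelope) computation yields $\partial_{s_i}c_d(\phi_{\vec s})=\int_\Sigma f_i\,d\widehat\mu_{\Theta_{d,\vec s}}$, modulo an $\vec s$-dependent symplectic conjugation of $f_i$ that is harmless since symplectomorphisms preserve $\omega$ and range here over a compact family. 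Integrating over $[0,\delta]^j$ and using the uniform convergence on the boundary gives, for each $i\le j$,
\[
\frac{1}{\delta^{\,j}}\int_{[0,\delta]^j}\Bigl(\int_\Sigma f_i\,d(\widehat\mu_{\Theta_{d,\vec s}}/d)\Bigr)\,d\vec s\ \longrightarrow\ \int_\Sigma f_i\,\omega\qquad (d\to\infty),
\]
and the same holds over every subbox.

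\textbf{Extraction.} The integrands above take values in the fixed interval $[0,\max f_i]$, and all their box-averages converge to $\int_\Sigma f_i\,\omega$; combining this with the monotonicity and the quantitative Weyl-law estimates, a measure-theoretic argument parallel to the one in Marques--Neves--Song and Irie produces a parameter $\vec s_*\in[0,\delta]^j$ and a degree $d\ge D$ for which $\Theta_{d,\vec s_*}$ is $\frac1m$-equidistributed against $f_1,\dots,f_j$. Then $\phi':=\phi_{\vec s_*}\in\mathcal N$ lies in $\mathcal U_{j,D,m}$, completing the density step and hence the proof.

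\textbf{Main obstacle.} I expect the crux to be the variational core of the density step: establishing the first-variation formula $\partial_{s_i}c_d(\phi_{\vec s})=\int_\Sigma f_i\,d\widehat\mu_{\Theta_{d,\vec s}}$ and its validity at \emph{almost every} parameter. This requires controlling the orbit sets realizing the PFH spectral invariants along the deformation --- showing that for a.e.\ parameter the minimizer is unique and nondegenerate, and that the derivative of the spectral invariant is then exactly the action integral against the orbit measure --- and it is here that the detailed construction of the PFH spectral invariants (together with a transversality/Sard argument in the parameter) from the earlier work enters. A secondary delicate point is the extraction of a single good parameter from averaged convergence: because the spectral invariants are only Lipschitz, not $C^1$, in $\vec s$, one cannot pass to the limit in the derivative pointwise and must instead exploit monotonicity and the uniformity supplied by the Weyl law, as in the cited works. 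The remaining points --- openness of the sets $\mathcal U_{j,D,m}$ and the passage between Hamiltonian isotopies and time-one maps --- are handled as in the proof of the generic density theorem.
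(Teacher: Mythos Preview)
Your overall strategy matches the paper's: a Baire scheme over a countable dense family of test functions, with density supplied by the PFH Weyl law plus the Marques--Neves--Song/Irie variational argument on a finite-dimensional Hamiltonian family, and with the first-variation formula for spectral invariants (the paper's Lemma~\ref{lem:derivatives}) as the technical core. The paper routes the density step through the mapping torus first (Proposition~\ref{prop:equidistribution}) and then descends to $\Sigma$ (Proposition~\ref{prop:equidistribution2}), and it obtains uniform convergence of the error functions from the uniform Hofer--Lipschitz bound rather than from monotonicity; these are presentational differences.

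There is, however, one genuine omission. You never invoke the \emph{monotone} hypothesis on the base map, and without it the PFH spectral invariants $c_d$ are simply not available: the non-vanishing theorem and the Weyl law both require a sequence of negative monotone classes $\Gamma_m$. Your ``preliminary perturbation'' to nondegeneracy is not enough; the paper's proof of Theorem~\ref{thm:generic} explicitly begins by using that monotone maps are $C^\infty$-dense in $\Diff(\Sigma,\omega)$, so that to prove density of your $\mathcal U_{j,D,m}$ near an arbitrary $\phi$ one first replaces $\phi$ by a nearby monotone map and only then runs the PFH argument. A secondary imprecision is in your extraction: the MNS lemma does not hand you a single parameter $\vec s_*$ at which the integral orbit set $\Theta_{d,\vec s_*}$ is nearly equidistributed --- in dimension $j>1$ there is no mean-value principle guaranteeing this --- but rather $N{+}1$ nearby parameters whose gradients have a convex combination close to zero. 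The output is therefore a \emph{real}-coefficient orbit set (a convex combination of the spectral minimizers), which is exactly why the paper's orbit sets $\cO\in\cP_{\bR}$ carry real multiplicities.
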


The second theorem is somewhat finer; it proves a generic equidistribution result in monotone Hamiltonian isotopy classes of area-preserving diffeomorphisms of $\Sigma$. 

\begin{thm}
\label{thm:main} Let $\phi$ be a monotone area-preserving diffeomorphism of $\Sigma$. A $C^\infty$-generic map in the Hamiltonian isotopy class of $\phi$ has an equidistributed sequence of orbit sets. 
\end{thm}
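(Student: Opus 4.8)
The plan is to combine the PFH ``Weyl law'' with a variational argument modeled on the work of Irie and of Marques--Neves--Song. Fix a monotone $\phi$, write $\cH$ for its Hamiltonian isotopy class, and identify a neighborhood of $\phi$ in $\cH$ (non-canonically) with a space of normalized Hamiltonians $H$ on $\Sigma\times S^1$ via $H\mapsto\phi_H$. For each such $H$ and each admissible degree $d$ there is a PFH spectral invariant $c_d(\phi_H)\in\bR$ with the following properties, all available from the circle of ideas behind the generic density theorem: \textbf{(i) Hofer--Lipschitz continuity}, so that $H\mapsto c_d(\phi_H)$ is continuous in the $C^\infty$ topology; \textbf{(ii) spectrality}: $c_d(\phi_H)$ is the PFH action of an orbit set $\Theta$ of $\phi_H$ of degree $d$, to which we attach the probability measure $\mu_\Theta$ on the mapping torus $Y_{\phi_H}$ obtained by normalizing the multiplicity-weighted sum of arc-length measures along the orbits of $\Theta$; and \textbf{(iii) the Weyl law} $c_d(\phi_H)/d\to \alpha(\phi)+\int_{S^1}\int_\Sigma H_t\,\omega\,dt$ as $d\to\infty$, with $\alpha(\phi)$ depending only on $\phi$. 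Here ``equidistributed sequence of orbit sets'' means a sequence $\{\Theta_{d_j}\}$, with $d_j\to\infty$, whose measures $\mu_{\Theta_{d_j}}$ converge weak-$*$ to the normalized volume measure $\mu_0$ on the mapping torus.

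The fourth and decisive ingredient is a \textbf{variational formula}: for $f\ge 0$ smooth and a base Hamiltonian $H$, the function $s\mapsto c_d(\phi_{H+sf})$ is nondecreasing and locally Lipschitz, hence differentiable for a.e.\ $s$, and at every point $s$ of differentiability
\[
\frac{d}{ds}\,c_d(\phi_{H+sf}) \;=\; d\int_{Y} f\,d\mu_{\Theta_s},
\]
where $\Theta_s$ realizes $c_d(\phi_{H+sf})$ (mapping tori identified in the usual way). Monotonicity follows from $f\ge 0$ and the monotonicity of PFH cobordism maps; the bound $0\le \tfrac{d}{ds}c_d(\phi_{H+sf})\le d\max f$ is then formal; and the identification of the a.e.\ derivative uses spectrality together with the fact that along the linear family $H+sf$ the PFH action functional varies in $s$ by the explicit term $\int f$ integrated over the orbit set. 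This is the PFH analogue of the Cristofaro-Gardiner--Hutchings--Ramos derivative formula for ECH spectral invariants under conformal rescalings, which is what Irie exploited in the Reeb setting. It will be convenient, and generically available, to work where the realizing orbit set $\Theta_s$ is \emph{unique}, so that $c_d$ is actually differentiable at $\phi_H$ in every Hamiltonian direction; otherwise one uses the convex-hull version of the formula.

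Granting these, the proof is a Baire-category argument. Fix a countable family $\{f_k\}$ of nonnegative smooth functions on $\Sigma\times S^1$, dense in $C^0$, whose integrals against probability measures detect weak-$*$ convergence to $\mu_0$. For $k,m,N\in\mathbb{N}$ let $\cG_{k,m,N}$ be the set of nondegenerate $\psi\in\cH$ with unique spectral orbit sets such that for some degree $d>N$, every orbit set $\Theta$ realizing $c_d(\psi)$ has $\bigl|\int_Y f_j\,d\mu_\Theta-\int_Y f_j\,d\mu_0\bigr|<1/m$ for all $j\le k$. I claim each $\cG_{k,m,N}$ contains a dense open subset of $\cH$; then $\bigcap_{k,m,N}\cG_{k,m,N}$ is residual, and a diagonal extraction produces, for each $\psi$ in it, degrees $d_j\to\infty$ whose realizing orbit sets equidistribute --- which is Theorem \ref{thm:main}. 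Openness (with strict inequalities) follows from (i)--(ii), the genericity and openness of nondegeneracy and uniqueness, and the upper semicontinuity of realizing orbit sets under $C^\infty$ perturbation, together with a uniform-in-$d$ modulus of continuity for $\mu_{\Theta}$ near such $\psi$. For density, given $\psi$ and a neighborhood, perturb in the directions $f_1,\dots,f_k$: integrating the variational formula along each deformation and invoking the Weyl law shows that, as $d\to\infty$, the average over the perturbation parameters of $\int f_j\,d\mu_{\Theta^{(d)}}$ tends to $\int f_j\,d\mu_0$; a Marques--Neves--Song-type comparison --- were some $\int f_j\,d\mu_{\Theta^{(d)}}$ to stay bounded away from $\int f_j\,d\mu_0$ along a subsequence, the integrated variational identity would contradict the sharp Weyl law --- then yields an arbitrarily small, still nondegenerate perturbation $\psi'$ of $\psi$ for which equidistribution holds at some degree $d>N$, i.e.\ $\psi'\in\cG_{k,m,N}$. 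An analogous argument, run with PFH spectral invariants for general area-preserving diffeomorphisms, gives Theorem \ref{thm:generic}.

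The main obstacle is the variational formula of the second paragraph: monotonicity is soft, but identifying its a.e.\ derivative \emph{exactly} with integration against the spectral orbit set --- with no loss --- requires a careful analysis of how the PFH action functional and its min-max critical value depend on the Hamiltonian, and this two-sided control is precisely what upgrades the one-sided estimate of the density theorem to the refined statement here. A secondary difficulty is bridging the gap between the $s$-averaged conclusion of the integrated Weyl law and genuine equidistribution at a single large degree; as in Marques--Neves--Song this is handled using the uniformity of the Weyl law over $C^\infty$-compact families of maps, the generic uniqueness of spectral orbit sets, and the monotonicity and continuity of $s\mapsto c_d(\phi_{H+sf})$. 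The remaining ingredients --- $C^\infty$-genericity of nondegeneracy and of uniqueness of spectral orbit sets, and the Baire bookkeeping --- are routine.
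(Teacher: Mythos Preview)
Your outline follows the same architecture as the paper: Hofer--Lipschitz continuity, spectrality, the Weyl law, a derivative formula for the spectral invariant along finite-dimensional Hamiltonian families, the Marques--Neves--Song lemma, and a Baire-category wrap-up. Two points, however, diverge in ways worth flagging.

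First, you lean on ``generic uniqueness of spectral orbit sets,'' both to get honest differentiability of $c_d$ and in your definition of $\cG_{k,m,N}$. This is plausible but is not proved anywhere and would itself require a transversality argument. The paper sidesteps it: the derivative lemma (Lemma~\ref{lem:derivatives}) only asserts that at a point of differentiability \emph{some} realizing orbit set computes the partial derivatives, and the MNS lemma (Lemma~\ref{lem:MNS}) is invoked in its convex-hull form, producing $N+1$ sequences whose gradient limits have a convex combination near zero. The output is a \emph{real-coefficient} orbit set (a convex combination of integral ones), not a single spectral one. Your parenthetical ``otherwise one uses the convex-hull version'' is exactly what the paper does throughout; uniqueness is never used.

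Second, your openness argument --- via upper semicontinuity of realizing orbit sets and a ``uniform-in-$d$ modulus of continuity for $\mu_\Theta$'' --- is more delicate than necessary and not obviously correct as stated. The paper decouples the good set from the spectral invariant: once a nearly-equidistributed orbit set $\cO$ is found (Proposition~\ref{prop:equidistribution2}), a further $C^\infty$-small perturbation (Lemma~\ref{lem:nonDegenerateAfterSmallPerturbation}) makes every simple orbit in $\cO$ nondegenerate. The good set $\cH(\Sigma,N,\epsilon)$ is then defined by the existence of such an orbit set with nondegenerate constituents, and openness is immediate because nondegenerate periodic orbits persist under perturbation.

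A smaller point: your one-line summary of the MNS step (``were some $\int f_j\,d\mu_{\Theta}$ to stay bounded away \ldots\ the integrated variational identity would contradict the sharp Weyl law'') does not capture the actual mechanism. The partial derivative $\partial_j e_d$ can change sign across the parameter cube, because the realizing orbit set jumps, so a direct contradiction with $e_d\to 0$ fails. What Lemma~\ref{lem:MNS} does is use small oscillation of the Lipschitz error function $e_d$ to locate a single limit point $\tau^\infty$ near which a \emph{convex combination} of gradients is small; this is precisely what forces all $k$ test-function conditions to hold simultaneously, at the cost of producing an orbit set with real (not integer) coefficients.
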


The definition of a monotone area-preserving map can be found in \S\ref{subsec:mappingTorii}. The monotonicity property is ubiquitous; any area-preserving map can be perturbed by a $C^\infty$-small amount to a monotone area-preserving map. Moreover, the identity map is monotone. To provide the reader with more intuition regarding monotone diffeomorphisms, we discuss two examples. 

\begin{exe} \label{ex:monotoneT2} 
Let $\phi$ be any area-preserving diffeomorphism of the two-torus $\bR^2/\bZ^2$, equipped with the standard area form $dx \wedge dy$, which is isotopic to the identity. Then $\phi$ is Hamiltonian isotopic to a translation 
$$(x, y) \mapsto (x + a, y + b)$$
and it is monotone if and only if $a$ and $b$ are rational.
\end{exe} 

Example \ref{ex:monotoneT2} is significant because it implies that, although monotone diffeomorphisms are $C^\infty$-dense in the space of area-preserving maps, they are not necessarily $C^\infty$-generic (the rational numbers are a dense but not generic subset of the reals). This causes a minor complication in that Theorem \ref{thm:generic} above is not an immediate corollary of Theorem \ref{thm:main}. It instead follows from a slight modification of the arguments used to prove Theorem \ref{thm:main}. 

\begin{exe} \label{ex:monotoneHamiltonian}
Any Hamiltonian diffeomorphism of $\Sigma$ is monotone. In general, given two Hamiltonian isotopic area-preserving maps $\phi$ and $\phi'$, the map $\phi$ is monotone if and only if $\phi'$ is. 
\end{exe}

Theorem \ref{thm:main} and Example \ref{ex:monotoneHamiltonian} imply a generic equidistribution result for Hamiltonian diffeomorphisms. 

\begin{cor} \label{cor:hamiltonian}
A $C^\infty$-generic Hamiltonian diffeomorphism of $\Sigma$ has an equidistributed sequence of orbit sets. 
\end{cor}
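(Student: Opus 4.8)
The plan is to obtain Corollary~\ref{cor:hamiltonian} as a direct specialization of Theorem~\ref{thm:main} to the identity map. First I would note that $\mathrm{id}$ is a monotone area-preserving diffeomorphism: this is recorded in the discussion immediately following Theorem~\ref{thm:main}, and it also follows from the second assertion of Example~\ref{ex:monotoneHamiltonian} together with the fact that $\mathrm{id}$ is trivially Hamiltonian. Theorem~\ref{thm:main} then applies with $\phi = \mathrm{id}$ and yields a $C^\infty$-residual subset of the Hamiltonian isotopy class of $\mathrm{id}$, every element of which admits an equidistributed sequence of orbit sets.

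The second step is to identify this isotopy class with the group $\Ham(\Sigma,\omega)$. By definition, a diffeomorphism of $\Sigma$ is Hamiltonian precisely when it is the time-one map of a (time-dependent) Hamiltonian flow, i.e.\ when it lies in the Hamiltonian isotopy class of $\mathrm{id}$; hence that isotopy class \emph{is} $\Ham(\Sigma,\omega)$, and it carries the same $C^\infty$ topology, namely the one it inherits as a subset of the space of area-preserving maps of $\Sigma$. Therefore the residual set provided by Theorem~\ref{thm:main} is a $C^\infty$-residual subset of $\Ham(\Sigma,\omega)$, which is exactly the statement of the corollary.

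The only point that needs attention is the matching of the two notions of genericity used above, i.e.\ that ``$C^\infty$-generic in the Hamiltonian isotopy class of $\mathrm{id}$'' and ``$C^\infty$-generic Hamiltonian diffeomorphism'' refer to the same residual sets. This is not a genuine obstacle but a bookkeeping matter about topologies: once one adopts the paper's convention that a Hamiltonian isotopy class carries the subspace $C^\infty$ topology, the two spaces coincide on the nose, so all the Baire-category content is already contained in Theorem~\ref{thm:main}. In particular, unlike the passage from Theorem~\ref{thm:main} to Theorem~\ref{thm:generic} (which is complicated by the failure of monotone maps to be generic, as explained via Example~\ref{ex:monotoneT2}), no additional perturbation argument is required here.
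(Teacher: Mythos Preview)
Your proposal is correct and matches the paper's own reasoning: the paper simply states that Corollary~\ref{cor:hamiltonian} follows from Theorem~\ref{thm:main} and Example~\ref{ex:monotoneHamiltonian}, and your argument spells out exactly this implication by taking $\phi = \mathrm{id}$ and identifying its Hamiltonian isotopy class with $\Ham(\Sigma,\omega)$.
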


It is known that the monotone assumption on the map $\phi$ in the statement of Theorem \ref{thm:main} cannot be removed or substantially weakened. Herman \cite{herman} showed that for a Diophantine rotation of the two-torus, any $C^\infty$-small Hamiltonian perturbation is conjugate to the rotation. This produces a (non-monotone) Hamiltonian isotopy class which contains an open set of maps without any periodic orbits, which precludes generic or even dense equidistribution in this isotopy class. This mirrors developments in the recent proof of the generic density theorem for periodic points of area-preserving surface diffeomorphisms; a monotone area-preserving map requires only a small Hamiltonian perturbation to acquire a dense set of periodic points, but a non-monotone map may require a non-Hamiltonian perturbation. 

We now explain what we mean by an ``equidistributed sequence of orbit sets'' in Theorems \ref{thm:generic} and \ref{thm:main}. Terminology surrounding periodic orbits and orbit sets is discussed in more detail in \S\ref{subsec:orbits}. An \textbf{orbit set} of a surface map $\phi$ is a formal sum
$$\cO = \sum_{k=1}^n a_k \cdot S_k$$
where each $a_k$ is a positive real number and each $S_k$ is a simple periodic orbit of $\phi$. Periodic orbits $S$ define functionals on the space $C^\infty(\Sigma)$ of smooth functions on $\Sigma$ by sending a function $f$ to its sum $S(f)$ over the points of the orbit. Similarly, we write
$$\cO(f) = \sum_{k=1}^n a_k \cdot S_k(f)$$
for the sum of $f$ over an orbit set $\cO$. 

We write $|S|$ for the period of a periodic orbit $S$ and correspondingly
$$|\cO| = \sum_{k=1}^n a_k \cdot |S_k|.$$

A sequence $\{\cO_N\}_{N \geq 1}$ of orbit sets is \textbf{equidistributed} if, for any $f \in C^\infty(\Sigma)$, the averages of $f$ over the $\cO_N$ converge to the integral of $f$:
$$\lim_{N \to \infty} \cO_N(f)/|\cO_N| = \int_{\Sigma} f\omega.$$

A formal algebraic argument as in the proof of \cite[Corollary $1.4$]{Irie21} shows that the existence of an equidistributed sequence of orbit sets implies a more concrete type of equidistribution concerning a sequence of genuine periodic orbits. 

\begin{cor} \label{cor:orbits}
There is a $C^\infty$-generic set of area-preserving diffeomorphisms of $\Sigma$ such that any map $\phi$ in this set satisfies the following property. There is a sequence $\{S_N\}_{N \geq 1}$ of periodic orbits of $\phi$ such that, for any $f \in C^\infty(\Sigma)$,
$$\lim_{N \to \infty} \frac{S_1(f) + \ldots + S_N(f)}{|S_1| + \ldots + |S_N|} = \int_\Sigma f\omega.$$
\end{cor}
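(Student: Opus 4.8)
The plan is to deduce Corollary \ref{cor:orbits} from Theorem \ref{thm:generic} by a purely formal argument, following the strategy of \cite[Corollary $1.4$]{Irie21}. Let $\{\cO_N\}_{N \geq 1}$ be an equidistributed sequence of orbit sets for a generic $\phi$, as provided by Theorem \ref{thm:generic}. Each $\cO_N = \sum_k a_k^N \cdot S_k^N$ is a finite positive-real combination of simple periodic orbits. The issue is that the coefficients $a_k^N$ are arbitrary positive reals and the orbits appearing are finite in number for each $N$, whereas we want a single honest sequence $\{S_N\}$ of periodic orbits whose running Birkhoff-type averages converge to $\int_\Sigma f\,\omega$. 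The idea is that an orbit set with real coefficients can be approximated, in the relevant weak-* sense, by a long concatenation of genuine orbits with integer multiplicities, and then a diagonal/telescoping argument stitches these together.

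Here are the steps I would carry out. First, I would observe that if $\cO = \sum_k a_k \cdot S_k$ is any orbit set and we replace the coefficients $a_k$ by positive rationals $b_k$ close to $a_k$, then the normalized functional $\cO'(f)/|\cO'|$ for $\cO' = \sum_k b_k \cdot S_k$ is $C^0$-close to $\cO(f)/|\cO|$ uniformly for $\|f\|_{C^0} \le 1$; clearing denominators, an orbit set with rational coefficients is (after scaling by a positive integer) an orbit set with positive integer coefficients, and rescaling does not change the normalized functional. Hence without loss of generality we may assume each $\cO_N$ has positive integer coefficients, i.e. $\cO_N$ corresponds to a finite multiset of periodic orbits $S_{N,1}, \dots, S_{N,m_N}$ (with repetition), and the normalized functional is literally $\big(\sum_j S_{N,j}(f)\big) / \big(\sum_j |S_{N,j}|\big)$. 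Second, I would fix a countable $C^0$-dense subset $\{f_i\}_{i \ge 1}$ of the unit ball of $C^\infty(\Sigma)$ (recall $\int_\Sigma f\,\omega$ and $\cO(f)$ both depend continuously on $f$ in the $C^0$ topology), and pass to a subsequence of $\{\cO_N\}$, still equidistributed, along which $|\big(\sum_j S_{N,j}(f_i)\big)/\big(\sum_j |S_{N,j}|\big) - \int_\Sigma f_i\,\omega| < 2^{-N}$ for all $i \le N$.

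Third comes the concatenation step, which is the heart of the argument. I would define the desired sequence $\{S_N\}_{N\ge 1}$ by listing the orbits of $\cO_1$ (with multiplicity), then those of $\cO_2$, then those of $\cO_3$, and so on: block $N$ consists of the multiset $\{S_{N,1}, \dots, S_{N,m_N}\}$. For a partial sum $S_1(f) + \dots + S_L(f)$ whose index $L$ ends exactly at the end of block $N$, the normalized average is a weighted average of the block averages $\big(\sum_j S_{k,j}(f)\big)/\big(\sum_j |S_{k,j}|\big)$ for $k = 1, \dots, N$, with weights proportional to the block periods $\sum_j |S_{k,j}|$. Since each block average is within $2^{-k}$ of $\int_\Sigma f_i\,\omega$ (for $i \le k$) and a weighted average of numbers converging to a limit converges to that limit provided no single block's weight dominates asymptotically — which I would arrange, if necessary, by further inflating later $\cO_N$ by large positive integers so that $\sum_j |S_{N,j}|$ grows, yet bounding the growth so that the total through block $N-1$ is not negligible against block $N$ — the partial averages at block boundaries converge to $\int_\Sigma f_i\,\omega$. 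Finally I would handle partial sums ending in the middle of a block by noting the error introduced is at most the ratio of a partial block period to the total accumulated period, which tends to $0$ once block periods are controlled; then a $3\varepsilon$-argument using $C^0$-density of $\{f_i\}$ upgrades convergence from the countable family to all $f \in C^\infty(\Sigma)$, and restricting to the $C^\infty$-generic set from Theorem \ref{thm:generic} finishes the proof.

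The main obstacle I anticipate is the bookkeeping in the weighted-average/concatenation step: one must simultaneously ensure (a) that block averages are close to the target, (b) that the weights (block periods) do not let any single block dominate in the limit, and (c) that mid-block partial sums do not spoil convergence. All three can be reconciled by choosing the integer inflation factors for the $\cO_N$ adaptively — large enough that the cumulative period is strictly increasing and no block is negligible relative to the running total, but not so large that a single block's period overwhelms everything before it. This is exactly the type of elementary but slightly delicate Cesàro/Toeplitz-summation estimate carried out in \cite[Corollary $1.4$]{Irie21}, and I would invoke that argument essentially verbatim, the only new input being Theorem \ref{thm:generic} in place of the corresponding existence statement there.
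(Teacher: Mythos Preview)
Your proposal is correct and matches the paper's approach: the paper does not give a detailed proof of Corollary~\ref{cor:orbits} but simply notes that it follows from Theorem~\ref{thm:generic} via the formal algebraic argument of \cite[Corollary~$1.4$]{Irie21}, which is precisely the rational-approximation and block-concatenation scheme you outline. Your sketch of that argument is accurate, including the identification of the bookkeeping needed to balance block weights.
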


\begin{rem}
In view of Corollary \ref{cor:hamiltonian}, the exact same result as Corollary \ref{cor:orbits} holds with ``area-preserving'' replaced by ``Hamiltonian''. 
\end{rem}

As mentioned at the beginning of this article, Theorem \ref{thm:generic} shows that, generically, the periodic points of area-preserving surface diffeomorphisms have a rich structure. This potentially has further applications to the study of periodic points of area-preserving surface diffeomorphisms. Irie's equidistribution theorem \cite{Irie21} for three-dimensional Reeb flows, a close cousin of Theorem \ref{thm:generic}, has already had interesting applications in this setting (see \cite{actionLinking, CDHR22}). 

\subsection{Outline of proofs} Theorems \ref{thm:generic} and \ref{thm:main} are proved using spectral invariants arising from Periodic Floer homology (PFH). These are quantitative invariants for area-preserving surface diffeomorphisms originally discovered by Hutchings. They have had numerous recent applications to the study of area-preserving surface maps, including the simplicity conjecture \cite{simplicity}, the large scale geometry of Hofer's metric \cite{largeScale}, and the generic density of periodic points \cite{closingLemma21, edtmairHutchings}. 

The paper \cite{closingLemma21}, beyond the proof of the generic density theorem, proved a broad ``Weyl law'' for PFH spectral invariants\footnote{A Weyl law for PFH spectral invariants corresponding to ``$U$-cyclic'' classes was also proved in \cite{edtmairHutchings}; see the introduction of \cite{closingLemma21} for a discussion regarding the U-cyclic condition and the Weyl laws in \cite{closingLemma21} and \cite{edtmairHutchings}.}, which is stated in \S\ref{subsec:spectralInvariants}.  This is one of the two main ingredients in the proof of Theorems \ref{thm:generic} and \ref{thm:main}. The Weyl law gives an asymptotic relation between the PFH spectral invariants of an area-preserving map $\phi$ and its composition $\phi^H = \phi \circ \phi^1_H$ with the time-one map of a Hamiltonian $H$; the asymptotic difference between the spectral invariants recovers the integral of $H$ over the mapping torus of $\phi$.

Fix a monotone area-preserving map $\phi$. Pairing the Weyl law with a variational argument (the second main ingredient) allows us to find, for any finite set of smooth functions $\{f_1, \ldots, f_k\}$ on $\Sigma$, a small Hamiltonian perturbation $\phi'$ of a map $\phi$ and an orbit set of $\phi'$ such that the average of each $f_i$ over the orbit set is very close to the integral of $f$ over $\Sigma$. A formal argument then produces a nearby Hamiltonian perturbation with a set of \emph{equidistributed} orbit sets, which proves Theorem \ref{thm:main}. This variational argument is inspired by arguments used by Marques--Neves--Song \cite{MarquesNevesSong} and Irie \cite{Irie21} to establish equidistribution results for minimal hypersurfaces and Reeb flows, respectively. Theorem \ref{thm:generic} is subsequently proved by a slight adaptation of the argument for Theorem \ref{thm:main}. 

\subsection{Organization of the paper} The rest of the paper is organized as follows. \S\ref{sec:prelim} goes over several preliminary notions, among them the Weyl law from \cite{closingLemma21} and other properties of PFH spectral invariants used in the later proofs. \S\ref{sec:lemmas} lists several supporting lemmas. \S\ref{sec:thmProof} proves Theorems \ref{thm:generic} and \ref{thm:main}, assuming the results in \S\ref{sec:lemmas}. \S\ref{sec:supportingProofs} provides proofs of the results in \S\ref{sec:lemmas}. 

\subsection{Acknowledgements} This project arose from my collaboration with Dan Cristofaro-Gardiner and Boyu Zhang on the Weyl law for PFH spectral invariants and the generic density theorem for area-preserving maps. I would like to thank them for numerous enlightening conversations. 

After the first version of this work was posted to the arXiv, I was notified of independent work of Edtmair--Yao \cite{edtmairYao21} proving Theorems \ref{thm:generic} and \ref{thm:main}, which also uses a Weyl law, and a non-vanishing result building on \cite[Theorem $1.6$]{closingLemma21}. 

This work was completed under the support of NSF Award \#DGE-1656466.

\section{Preliminaries}\label{sec:prelim}

\subsection{Area-preserving and Hamiltonian diffeomorphisms}\label{subsec:maps}

Denote by $\Diff(\Sigma, \omega)$ the space of area-preserving diffeomorphisms $\phi: \Sigma \to \Sigma$.

Write $C^\infty(\bR/\bZ \times \Sigma)$ for the space of smooth functions on $\bR/\bZ \times \Sigma$. For any $H \in C^\infty(\bR/\bZ \times \Sigma)$ and $t \in \bR/\bZ$, we will use $H_t$ to denote its restriction to $\{t\} \times \Sigma$; this is a smooth function on $\Sigma$. Write $\cH(\Sigma)$ for the space of smooth functions $H: \bR/\bZ \times \Sigma \to \bR$ such that $H_t \equiv 0$ near $t = 0$ and $t = 1$. 

In the subsequent arguments, we will work with multi-parameter families of Hamiltonians. For any $N \geq 1$, write $C^\infty([0,1]^N \times \bR/\bZ \times \Sigma)$ for the space of smooth functions on $[0,1]^N \times \bR/\bZ \times \Sigma$. For any $N \geq 1$ and $H \in C^\infty([0,1]^N \times \bR/\bZ \times \Sigma)$ and any $\tau \in [0,1]^N$, we will use $H^\tau$ to denote the function in $C^\infty(\bR/\bZ \times \Sigma)$ given by restricting $H$ to $\{\tau\} \times \bR/\bZ \times \Sigma$. Write $\cH^N(\Sigma)$ for the space of $H \in C^\infty([0,1]^N \times \bR/\bZ \times \Sigma)$ such that $H^\tau \in \cH(\Sigma)$ for every $\tau \in [0,1]^N$. 

We need to restrict to the slightly smaller spaces $\cH(\Sigma)$ and $\cH^N(\Sigma)$ of Hamiltonians for technical reasons. A Hamiltonian which is not constant near $0 \in \bR/\bZ$ will in general not be well-defined as smooth functions on the mapping torus of an arbitrary area-preserving map. Any Hamiltonian $H \in C^\infty(\bR/\bZ \times \Sigma)$ can be transformed into some $H_* \in \cH(\Sigma)$ with the same time-one map by setting
$$H_*(t, x) = \beta'(t)H_*(\beta(t), x)$$
where $\beta: [0,1] \to [0,1]$ is a smooth, increasing function such that $\beta(t) = 0$ near $t = 0$ and $\beta(t) = 1$ near $t = 1$. The same transformation, applied on each element of the family, changes $H \in C^\infty([0,1]^N \times \bR/\bZ \times \Sigma)$ to an element of the space $\cH^N(\Sigma)$. 

For any $\phi \in \Diff(\Sigma, \omega)$ and $H \in \cH(\Sigma)$, we write $\phi^H \in \Diff(\Sigma, \omega)$ to denote the composition $\phi \circ \phi^1_H$ of $\phi$ with the time-one map of $H$. For any $\phi \in \Diff(\Sigma, \omega)$, $H \in \cH^N(\Sigma)$, and $\tau \in [0,1]^N$, for the sake of simplifying notation we will use $\phi^\tau$ to denote $\phi^{H^\tau}$ when $H$ is clear from context.

\subsection{Orbits and orbit sets}\label{subsec:orbits} Fix any $\phi \in \Diff(\Sigma, \omega)$. A \textbf{periodic orbit} of $\phi$ is a finite ordered set of points $S = \{x_1, \ldots, x_d\} \subset \Sigma$ (possibly with multiplicity) such that $\phi(x_i) = x_{i+1}$ for $i = 1, \ldots, d - 1$ and $\phi(x_d) = x_1$. The cardinality $d = |S|$ of $S$ is called the \textbf{period} of $S$. A periodic orbit $S = \{x_1, \ldots, x_d\}$ is \textbf{simple} if all of the $x_i$ are pairwise distinct. Use $\cP(\phi)$ to denote the set of all simple periodic orbits of $\phi$. 

A periodic orbit $S = \{x_1, \ldots, x_d\}$ is \textbf{nondegenerate} if the Poincar\'e return map $D\phi^d(x_1): T_{x_1}\Sigma \to T_{x_1}\Sigma$ does not have $1$ as an eigenvalue. We say that $\phi$ is \textbf{$d$-nondegenerate} if any periodic orbit with period $\leq d$ is nondegenerate. We say that $\phi$ is \textbf{nondegenerate} if any periodic orbit is nondegenerate.

A \textbf{orbit set}, the space of which we denote by $\cP_{\bR}(\phi)$, is a formal finite linear combination of elements of $\cP(\phi)$ (simple periodic orbits) with positive real coefficients. An \textbf{integral orbit set}, the space of which we denote by $\cP_{\bZ}(\phi)$, is an element of $\cP_{\bR}(\phi)$ where all the coefficients are positive integers. For any orbit set
$$\cO = \sum_{k=1}^N a_k \cdot S_k \in \cP_{\bR}(\phi),$$
 we write
$$|\cO| = \sum_{k=1}^N a_k \cdot |S_k| \in \bR.$$

Orbits and orbit sets define functionals on the set of real-valued functions on $\Sigma$. Fix any function $f: \Sigma \to \bR$. If $S = \{x_1, \ldots, x_d\}$ is a periodic orbit, then we write
$$S(f) = \sum_{i=1}^d f(x_i)$$
for the sum of $f$ along the points in $S$. If 
$$\cO = \sum_{k=1}^N a_k \cdot S_k \in \cP_{\bR}(\phi)$$
is an orbit set, then we write
$$\cO(f) = \sum_{k=1}^N a_k \cdot S_k(f).$$

A sequence $\{\cO_N\}_{N \geq 1}$ of orbit sets is \textbf{equidistributed} if, for any $f \in C^\infty(\Sigma)$, 
$$\lim_{k \to \infty} \cO_N(f)/|\cO_N| = \int_\Sigma f \omega.$$

\subsection{The mapping torus construction}\label{subsec:mappingTorii} Fix any $\phi \in \Diff(\Sigma, \omega)$. Then the \textbf{mapping torus}, denoted by $M_\phi$, is the quotient of $[0, 1]_t \times \Sigma$ by the identification $(1, p) \sim (0, \phi(p))$. It fibers over the circle, which induces a well-defined degree function on $H_1(M_\phi,\mathbb{Z})$. 

The manifold $M_\phi$ has a canonical vector field $R$ induced by the vector field $\partial_t$ on $[0,1] \times \Sigma$. The periodic orbits of $R$ are in one-to-one correspondence with periodic orbits of $\phi$; denote by $\Theta_S$ the orbit of $R$ corresponding to a periodic orbit $S$ of $\phi$. The orbit $\Theta_S$ is simple if and only if $S$ is simple; considering it as an embedded loop in $M_\phi$ oriented by $R$ produces an integral homology class $[S] \in H_1(M_\phi; \bZ)$. The degree of $[S]$ is equal to the period $|S|$. 

Any orbit set $\cO = \sum_{k=1}^N a_k \cdot S_k$ has an associated sum $\Theta_{\cO} = \sum_{k=1}^N a_k \cdot \Theta_{S_k}$, which is a formal linear combination of simple periodic orbits of $R$ with positive real coefficients. Denote by $[\cO] = \sum_{k = 1}^N a_k \cdot [S_k] \in H_1(M_\phi; \bR)$ the associated homology class. If $\cO \in \cP_{\bZ}(\phi)$ then $[\cO] \in H_1(M_\phi; \bZ)$. 

The mapping torus $M_\phi$ also has a canonical closed two form $\omega_{\varphi}$ induced from $\omega$, a canonical closed one-form $dt$, and a canonical two-plane field $V$ which is the vertical tangent bundle. The two-plane field $V$ admits a nonempty, contractible set of almost-complex structures which are compatible with the restriction of $\omega_\phi$; the choice of any one of these defines a first Chern class $c_1(V) \in H^2(M_\phi; \bZ)$. A homology class $\Gamma \in H_1(M_\phi; \bZ)$ is \textbf{monotone} if there is a constant $\rho \neq 0$ such that 
$$\text{PD}(\Gamma) + 2c_1(V) = -\rho[\omega_\phi].$$ 

The class $\Gamma$ is said to be negative/positive monotone if $\rho$ is negative/positive, respectively. We say that $\phi \in \Diff(\Sigma, \omega)$ is \textbf{monotone} if $M_\phi$ has a monotone class. A quick computation shows that if $\phi$ is monotone, then $M_\phi$ always admits a negative monotone class, and more generally a sequence of negative monotone classes with degrees increasing monotonically, which is important for the Weyl law in Theorem \ref{thm:weylLaw} below. As previously stated in Example \ref{ex:monotoneHamiltonian}, given any two Hamiltonian isotopic maps $\phi$ and $\phi'$, $\phi$ is monotone if and only if $\phi'$ is. 

The mapping tori of two Hamiltonian isotopic elements $\phi$ and $\phi'$ of $\Diff(\Sigma, \omega)$ can be identified given the data of a Hamiltonian $H$ such that $\phi' = \phi \circ \phi^1_H$. Define
$$M_H: M_\phi \to M_{\phi'}$$
as the map induced by the diffeomorphism
$$(t, x) \mapsto (t, (\phi^t_H)^{-1}(x))$$
on $[0,1] \times \Sigma$. 

\subsection{Twisted Periodic Floer homology}\label{subsec:pfh} Twisted Periodic Floer homology (PFH) is a Floer theory for area-preserving diffeomorphisms created by Hutchings, see \cite{HutchingsECH02} and \cite{simplicity}. 

Twisted PFH depends on the following data. Pick a monotone, nondegenerate $\phi \in \Diff(\Sigma, \omega)$ and form the mapping torus $M_\phi$. Pick a negative monotone class $\Gamma \in H_1(M_\phi; \bZ)$ of degree $\geq \max(1, G)$. Pick a \emph{trivialized reference cycle}. This consists of two pieces of data. The first is a formal positive integer linear combination $\Theta_{\text{ref}}$ of loops in $M_\phi$ representing the homology class $\Gamma$. The second is a trivialization of the vertical tangent bundle $V$ over these loops. 

Then the twisted PFH (with $\mathbb{Z}/2$-coefficients) is a $\mathbb{Z}$-graded $\mathbb{Z}/2$-vector space
$$\TWPFH_*(\phi, \Gamma, \Theta_{\text{ref}})$$
which is the homology of a chain complex\footnote{There is an auxiliary choice of an almost-complex structure $J$ on $V$ in the definition of the chain complex which we omit from the notation. The twisted PFH groups for different choices of $J$ are canonically isomorphic and the spectral invariants do not depend on this choice either.}
$$\TWPFC_*(\phi, \Gamma, \Theta_{\text{ref}}).$$

The complex is the $\bZ/2$-vector space of formal sums of pairs $(\cO, W)$. Here $\cO$ is an integral orbit set (an element of $\cP_{\bZ}(\phi)$) such that $[\cO] = \Gamma$ and hyperbolic orbits appear in $\cO$ with multiplicity at most $1$ and $W$ is a class in the homology group $H_2(M_\phi, \Theta_{\cO}, \Theta_{\text{ref}}; \bZ)$ associated to $2$-chains with boundary $\Theta_{\cO} - \Theta_{\text{ref}}$. The differential, whose definition is outside of the scope of this paper, is constructed by counting certain pseudoholomorphic curves in $\bR \times M_\phi$.

As discussed in \cite[\S$2.6$]{closingLemma21}, the definition of twisted PFH can be extended to the case where the map $\phi$ is degenerate, by taking a nondegenerate Hamiltonian perturbation. Fix a negative monotone class $\Gamma$ and a trivialized reference cycle $\Theta_{\text{ref}}$. Fix any Hamiltonian $H \in \cH(\Sigma)$ such that the map $\phi^H = \phi \circ \phi^1_H$ is nondegenerate. Then the groups
$$\TWPFH_*(\phi^H, (M_H)_*(\Gamma), M_H(\Theta_{\text{ref}}))$$
are canonically isomorphic for all possible choices of $H$. We can therefore unambiguously define
$$\TWPFH_*(\phi, \Gamma, \Theta_{\text{ref}})$$
to be any one of the groups above. 

\subsection{PFH spectral invariants}\label{subsec:spectralInvariants} Hutchings observed that twisted PFH (unlike the standard version of PFH) admits natural quantitative invariants, called the \textbf{PFH spectral invariants}. We write down a version of PFH spectral invariants following \cite{simplicity,closingLemma21}. 

We begin by defining the spectral invariants for nondegenerate $\phi$. Fix $\Gamma$ negative monotone and a reference cycle $\Theta_{\text{ref}}$ to define the Floer complex $\TWPFC_*(\phi, \Gamma, \Theta_{\text{ref}})$. The \textbf{PFH action functional} is a real-valued on PFH generators, defined as 
$$\mathbf{A}(\cO, W) = \int_W \omega_\phi$$
where $\omega_\phi$ is the canonical closed two-form on $M_\phi$. For any $L \in \bR$, there is an associated subcomplex
$$\TWPFC_*^L(\phi, \Gamma, \Theta_{\text{ref}})$$
of the twisted PFH complex generated by the pairs $(\cO, W)$ such that $\mathbf{A}(\cO, W) \leq L$. For any nonzero class $\sigma \in \TWPFH_*(\phi, \Gamma, \Theta_{\text{ref}})$, we define the \textbf{PFH spectral invariant}
$$c_\sigma(\phi, \Gamma, \Theta_{\text{ref}})$$
to be infimum of all $L$ such that $\sigma$ is represented by a cycle in $\TWPFC_*^L(\phi, \Gamma, \Theta_{\text{ref}})$. We will not use this definition directly, but rather several properties of the PFH spectral invariants established in \cite{largeScale, closingLemma21}. 

Given any data set $\mathbf{S} = (\phi, \Gamma, \Theta_{\text{ref}})$ with $\phi$ nondegenerate and a Hamiltonian $H \in \cH(\Sigma)$ such that $\phi^H$ is nondegenerate, we can define a spectral invariant for $\phi^H$ by ``pushing forward'' the data set by $H$. For any nonzero class $\sigma \in \TWPFH_*(\phi, \Gamma, \Theta_{\text{ref}})$, we write
$$c_\sigma(H; \mathbf{S}) = c_\sigma(H; \phi, \Gamma, \Theta_{\text{ref}})$$
to denote the spectral invariant
$$c_\sigma(\phi^H, (M_H)_*(\Gamma), M_H(\Theta_{\text{ref}}))$$
where $\sigma$ is interpreted as a PFH class of $\phi^H$ via the canonical isomorphism
$$\TWPFH_*(\phi^H, (M_H)_*(\Gamma), M_H(\Theta_{\text{ref}})) \simeq \TWPFH_*(\phi, \Gamma, \Theta_{\text{ref}}).$$

The following inequality, relating the spectral invariants for a pair of Hamiltonians $H$ and $K$, was shown in \cite{closingLemma21}. 

\begin{prop}(Hofer-Lipschitz continuity, \cite[Proposition $5.2$]{closingLemma21}) \label{prop:hoferContinuity}
Fix a data set $\mathbf{S} = (\phi, \Gamma, \Theta_{\text{ref}})$ such that $\Gamma$ has degree $d$. Fix any two Hamiltonians $H$ and $K$ in $\cH(\Sigma)$ such that $\phi^H$ and $\phi^K$ are nondegenerate. Then the following inequality holds:
$$d\int_{\bR/\bZ} \min (H - K)_t dt \leq c_\sigma(H; \mathbf{S}) - c_\sigma(K; \mathbf{S}) + \int_{\Theta_{\text{ref}}} (H - K) dt \leq d\int_{\bR/\bZ} \max (H - K)_t dt.$$
\end{prop}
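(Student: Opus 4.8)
The plan is to reduce the two-sided estimate to a single monotonicity statement for the spectral invariants and then obtain that statement from the filtered continuation maps underlying the canonical isomorphisms recalled in \S\ref{subsec:pfh}.

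\emph{Step 1: reduction to a monotonicity lemma.} Applying the right-hand inequality to the pair $(K,H)$ in place of $(H,K)$, and using $\max(K-H)_t=-\min(H-K)_t$, yields exactly the left-hand inequality for $(H,K)$; so it suffices to prove the right-hand inequality for every admissible pair. Given such a pair, since $t\mapsto\max_{x\in\Sigma}(H-K)(t,x)$ is Lipschitz, choose for each $\epsilon>0$ a smooth function $m$ on $\bR/\bZ$ with $m(t)\ge\max_{x\in\Sigma}(H-K)(t,x)$ for all $t$ and $\int_{\bR/\bZ}m\,dt\le\int_{\bR/\bZ}\max(H-K)_t\,dt+\epsilon$, and put $H_+:=K+m$. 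Adding a function of $t$ to a Hamiltonian does not change its Hamiltonian vector field, hence leaves the isotopy and the mapping-torus identification unchanged, so $\phi^{H_+}=\phi^K$, $M_{H_+}=M_K$, and $c_\sigma(H_+;\mathbf{S})=c_\sigma(K;\mathbf{S})$. Since $\Theta_{\text{ref}}$ has degree $d$ it projects to $\bR/\bZ$ with degree $d$, so $\int_{\Theta_{\text{ref}}}(H_+-K)\,dt=d\int_{\bR/\bZ}m\,dt$. Because $H\le H_+$ pointwise, the right-hand inequality for $(H,K)$, up to the error $d\epsilon$, follows from the following \emph{monotonicity lemma}, applied to $(H_0,H_1)=(H,H_+)$: if $H_0,H_1\in\cH(\Sigma)$ with $\phi^{H_0},\phi^{H_1}$ nondegenerate and $H_0\le H_1$ pointwise, then $c_\sigma(H_0;\mathbf{S})+\int_{\Theta_{\text{ref}}}H_0\,dt\le c_\sigma(H_1;\mathbf{S})+\int_{\Theta_{\text{ref}}}H_1\,dt$. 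Letting $\epsilon\to 0$ finishes the reduction.

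\emph{Step 2: the continuation map.} To prove the monotonicity lemma, interpolate by the linear homotopy $H_s:=(1-s)H_0+sH_1$, which lies in $\cH(\Sigma)$ for each $s\in[0,1]$ and has $\partial_sH_s=H_1-H_0\ge 0$. This homotopy assembles a symplectic cobordism over $[0,1]\times M_\phi$ between the mapping tori of $\phi^{H_0}$ and $\phi^{H_1}$, and counting pseudoholomorphic curves in its completion produces a chain map $\TWPFC_*(\phi^{H_1},\dots)\to\TWPFC_*(\phi^{H_0},\dots)$ realizing on homology the canonical isomorphism $\TWPFH_*(\phi^{H_1},\dots)\simeq\TWPFH_*(\phi^{H_0},\dots)$ of \S\ref{subsec:pfh}. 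The content needed is that, the homotopy being monotone (read from $H_1$ down to $H_0$ it is non-increasing), this chain map is filtered in the favourable direction: a cycle representing $\sigma$ for $\phi^{H_1}$ at PFH-action $\le L$ is carried to a cycle representing $\sigma$ for $\phi^{H_0}$ at PFH-action $\le L+\Delta$, with $\Delta$ depending only on the homotopy, so that $c_\sigma(H_0;\mathbf{S})\le c_\sigma(H_1;\mathbf{S})+\Delta$. Passing through possibly degenerate intermediate maps $\phi^{H_s}$ is harmless, since twisted PFH and its cobordism maps are defined for all monotone area-preserving maps, as recalled in \S\ref{subsec:pfh}.

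\emph{Step 3: the action estimate, and the main obstacle.} It remains to show one may take $\Delta\le\int_{\Theta_{\text{ref}}}(H_1-H_0)\,dt$; combined with Step 2 and a rearrangement this gives the monotonicity lemma, hence the proposition. The PFH action of a generator $(\cO,W)$ is $\int_W\omega_{\phi^{H_s}}$, and along the homotopy the two-forms $\omega_{\phi^{H_s}}$ glue to a closed two-form on the cobordism, so the change of action across a contributing curve equals its area for that form; by Stokes' theorem this is a boundary integral. Pulling back to $M_\phi$ by the maps $M_{H_s}$ and using that $M_{H_s}^*\omega_{\phi^{H_s}}$ differs from $\omega_\phi$ by an exact form built from $H_s\,dt$, this boundary integral splits into a contribution over the input and output orbit sets, which has the right sign because the homotopy is monotone (this is where monotonicity is essential, via nonnegativity of cobordism-curve areas), and a contribution over the reference cycle equal to $\int_{\Theta_{\text{ref}}}(H_1-H_0)\,dt$. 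Carrying out this bookkeeping so that all signs, the degree-$d$ weighting, and the reference-cycle term emerge exactly as in the statement, while controlling the relevant curve areas uniformly over the moduli spaces, is the main technical obstacle; this is exactly the type of filtered-continuation-map estimate developed for PFH spectral invariants in \cite{closingLemma21} and \cite{largeScale}, so an alternative is to quote the chain-level inequality established there and apply it to the homotopy $H_s$.
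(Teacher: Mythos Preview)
The paper does not prove this proposition at all: it is quoted verbatim as \cite[Proposition~5.2]{closingLemma21} and used as a black box. So there is no ``paper's own proof'' to compare against; your proposal is an attempt to supply what the paper deliberately outsources.

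That said, your outline is the standard route and is essentially what is done in the references \cite{closingLemma21,largeScale}. The reduction in Step~1 is clean and correct: symmetry gives the left inequality from the right; shifting $K$ by a function $m(t)$ of $t$ alone leaves $\phi^K$, $M_K$, and hence $c_\sigma(K;\mathbf S)$ unchanged, while the reference-cycle integral picks up exactly $d\int m$; so the right inequality reduces to monotonicity in $H$. (One small point you glossed over: to keep $H_+\in\cH(\Sigma)$ you need $m(t)=0$ near $t=0,1$, which is compatible with $m(t)\ge\max_x(H-K)(t,x)$ since $H,K\in\cH(\Sigma)$ forces that max to vanish there.) Steps~2--3 correctly identify the mechanism---a filtered continuation map coming from a monotone homotopy, with the action shift computed by Stokes' theorem and the pullback identity $M_{H_s}^*\omega_{\phi^{H_s}}=\omega_\phi+d(H_s\,dt)$---but you do not actually carry out the estimate, and your final sentence concedes as much by proposing to quote the chain-level inequality from \cite{closingLemma21,largeScale}. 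At that point your proof and the paper's treatment coincide: both cite the result rather than prove it.
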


A posteriori, Proposition \ref{prop:hoferContinuity} allows us to extend the definition of $c_\sigma(H; \bfS)$ to the case where $\phi^H$ is degenerate by setting it to be the limit of $c_\sigma(H_k; \bfS)$ for any sequence of Hamiltonians $\{H_k\}_{k \geq 1}$ in $\cH(\Sigma)$ such that $\phi^{H_k}$ is nondegenerate for every $k$ and the $H_k$ limit to $H$ in the $C^2$ norm. The same bound as in Proposition \ref{prop:hoferContinuity} holds for the extended spectral invariants. Next, we write down the ``spectrality'' property, which shows that the PFH spectral invariants are equal to actions of PFH generators.

\begin{prop}(Spectrality, \cite[Proposition $5.1$]{closingLemma21}) \label{prop:spectrality}
Fix a data set $\bfS = (\phi, \Gamma, \Theta_{\text{ref}})$. Then for any nonzero class $\sigma \in \TWPFH_*(\phi, \Gamma, \Theta_{\text{ref}})$ and Hamiltonian $H \in \cH(\Sigma)$, there is an integral orbit set $\cO$ of $\phi^H$ with $[\cO] = \Gamma$ and a class $W \in H_2(M_\phi, \Theta_{\cO}, \Theta_{\text{ref}}; \bZ)$ such that
$$\bfA(\cO, W) = c_\sigma(H; \phi, \Gamma, \Theta_{\text{ref}}).$$
\end{prop}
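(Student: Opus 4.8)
I would prove the statement in two cases, according to whether $\phi^H$ is nondegenerate, reducing the first case to the classical spectrality property of a filtered Floer complex and the second to the first by an approximation argument.

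Suppose first that $\phi^H$ is nondegenerate. Since $\phi^H$ is Hamiltonian isotopic to $\phi$ it is monotone, so $\bfS_H := (\phi^H, (M_H)_*\Gamma, M_H(\Theta_{\text{ref}}))$ is a genuine data set and, by definition, $c_\sigma(H;\bfS)$ is the PFH spectral invariant of $\bfS_H$, i.e. the infimum of those $L$ for which $\sigma$ is represented by a cycle in $\TWPFC_*^L(\bfS_H)$. The point is to show this infimum is attained and equals the action of a generator. I would argue as in Hamiltonian Floer theory: the set of actions $\bfA(\cO',W')$ of generators in the grading of $\sigma$ is a closed, nowhere dense subset of $\bR$ --- a standard consequence of the monotonicity of $\Gamma$, which forces the PFH index and the action to be affinely related, together with the compactness properties of PFH, which bound the number of orbit sets of degree $\deg\Gamma$ whose minimal action lies below a given threshold. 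Granting this discreteness, $c_\sigma(H;\bfS)$ is the minimum over cycles $z$ representing $\sigma$ of the largest action of a generator appearing in $z$; this minimum is attained and is manifestly an action value $\bfA(\cO', W')$. Transporting $(\cO', W')$ back through the identification $M_H$, which is compatible with the PFH data entering the action and the homology classes, yields an integral orbit set $\cO$ of $\phi^H$ with $[\cO] = \Gamma$ and a class $W \in H_2(M_\phi, \Theta_\cO, \Theta_{\text{ref}}; \bZ)$ with $\bfA(\cO,W) = c_\sigma(H;\bfS)$.

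Now suppose $\phi^H$ is degenerate. Pick Hamiltonians $H_k \in \cH(\Sigma)$ with $H_k \to H$ in $C^2$ and $\phi^{H_k}$ nondegenerate for every $k$, which is possible since the set of such $H_k$ is $C^\infty$-dense (indeed residual) in $\cH(\Sigma)$. By the $C^2$-extension of Proposition~\ref{prop:hoferContinuity}, $c_\sigma(H_k;\bfS) \to c_\sigma(H;\bfS)$, because all the terms on the two sides of that estimate controlled by $H - H_k$ tend to $0$. The nondegenerate case supplies, for each $k$, a generator $(\cO_k, W_k)$ of $\phi^{H_k}$ --- with $\cO_k$ an integral orbit set, $[\cO_k] = \Gamma$, and $W_k \in H_2(M_\phi, \Theta_{\cO_k}, \Theta_{\text{ref}}; \bZ)$ --- such that $\bfA(\cO_k, W_k) = c_\sigma(H_k;\bfS)$. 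Since $\cO_k$ has total multiplicity equal to the fixed degree $d = \deg\Gamma$, it is a configuration of at most $d$ points of $\Sigma$ lying on periodic orbits of period $\le d$; as $\phi^{H_k} \to \phi^H$ in $C^1$, a subsequence of these configurations converges to one supported on periodic orbits of $\phi^H$, defining an integral orbit set $\cO$ of $\phi^H$ of total multiplicity $d$ with $[\cO] = \Gamma$ (the homology class is locally constant under this convergence). Finally, because the relative homology groups $H_2(M_\phi, \Theta_\bullet, \Theta_{\text{ref}}; \bZ)$ assemble into a local system over the space of orbit sets and $\bfA$ depends continuously on the generator, along a further subsequence the $W_k$ stabilize to a class $W \in H_2(M_\phi, \Theta_\cO, \Theta_{\text{ref}}; \bZ)$ with $\bfA(\cO, W) = \lim_k \bfA(\cO_k, W_k) = c_\sigma(H;\bfS)$.

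The nondegenerate case is classical, so I expect the main obstacle to be the compactness and continuity bookkeeping in the degenerate case: showing that the orbit sets $\cO_k$ admit a subsequential limit which is a bona fide integral orbit set of $\phi^H$ of the correct degree and homology class --- orbits may collide, or an orbit's period may drop, in the limit, although the uniform period bound $d$ keeps the configurations inside a compact set and preserves the total count --- and that the relative classes $W_k$, which a priori live in different groups, can be identified for large $k$ and stabilize with actions converging to $c_\sigma(H;\bfS)$. This is exactly the type of degeneration argument already made rigorous in \cite{closingLemma21}, and I would follow that treatment.
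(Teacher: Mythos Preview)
The paper does not prove this proposition; it is quoted verbatim as \cite[Proposition~5.1]{closingLemma21} and used as a black box, so there is no ``paper's own proof'' to compare against. Your sketch is a reasonable outline of how spectrality is typically established---discreteness of the action spectrum in the nondegenerate case (here via monotonicity of $\Gamma$), followed by a limiting argument for degenerate $\phi^H$---and indeed you correctly anticipate that the delicate part is the compactness/continuity bookkeeping for the pairs $(\cO_k, W_k)$, which is handled in the cited reference rather than here.
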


The final property that we will make use of is the ``Weyl law'' for PFH spectral invariants, which gives a relative asymptotic formula for the spectral invariants as the degree of $\Gamma$ increases. Fix some nondegenerate $\phi \in \Diff(\Sigma, \omega)$. Fix a sequence of negative monotone classes $\{\Gamma_m\}_{m \geq 1}$ with respective degrees $d_m$ increasing monotonically to $\infty$ and a sequence of reference cycles $\{\Theta_m\}_{m \geq 1}$. Write $\bfS_m = (\phi, \Gamma_m, \Theta_m)$ for every $m$. 

\begin{thm} (Weyl law, \cite[Theorem $1.5$]{closingLemma21}) \label{thm:weylLaw}
Let $\bfS_m = (\phi, \Gamma_m, \Theta_m)$ be a sequence of data sets as fixed above. Then for any sequence of nonzero classes $\sigma_m \in \TWPFH_*(\phi, \Gamma_m, \Theta_m)$ and Hamiltonian $H \in \cH(\Sigma)$, we have the identity
$$\lim_{m \to \infty} \frac{c_{\sigma_m}(H; \bfS_m) - c_{\sigma_m}(0; \bfS_m) + \int_{\Theta_m} H dt}{d_m} = \int_{M_\phi} H \omega_\phi \wedge dt.$$
\end{thm}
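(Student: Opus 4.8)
The plan has two stages: soft reductions coming from the continuity already recorded in Proposition~\ref{prop:hoferContinuity}, and then the genuine asymptotic input, which I expect to require Seiberg--Witten theory in the spirit of the Cristofaro-Gardiner--Hutchings--Ramos treatment of the ECH volume conjecture. Write $W_m(H) = d_m^{-1}\big(c_{\sigma_m}(H;\bfS_m) - c_{\sigma_m}(0;\bfS_m) + \int_{\Theta_m} H\,dt\big)$ for the quantity whose limit is sought. Applying Proposition~\ref{prop:hoferContinuity} with $K = 0$ shows that $\int_{\bR/\bZ}\min H_t\,dt \le W_m(H) \le \int_{\bR/\bZ}\max H_t\,dt$ and that $H \mapsto W_m(H)$ is $1$-Lipschitz in the $C^0$-norm, uniformly in $m$. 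In particular, when $H$ depends only on $t$ the two bounds coincide and force $W_m(H) = \int_{\bR/\bZ}H_t\,dt = \int_{M_\phi}H\,\omega_\phi\wedge dt$ on the nose; so the target identity already holds for $t$-dependent Hamiltonians, only the fibrewise average of $H$ can survive the limit, and by the uniform Lipschitz bound it suffices to prove the identity for $H$ (and $\phi$) in a $C^2$-dense set. Hence one may assume $\phi$ and $\phi^H$ are both nondegenerate.

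For the analytic core, I would pass through the Lee--Taubes isomorphism identifying twisted PFH of $(\phi,\Gamma_m,\Theta_m)$ with a twisted Seiberg--Witten Floer cohomology of the mapping torus $M_\phi$ carrying the stable Hamiltonian structure built from $(\omega_\phi, dt)$. Under this identification the PFH action functional $\bfA$ becomes, up to an affine constant depending only on $(\Gamma_m,\Theta_m)$, the Chern--Simons--Dirac functional with a perturbation parameter $r$ that grows linearly with $d_m$ (the degree enters linearly because $[\omega_\phi]$ pairs with the fibre class to give the total area $1$). Thus $c_{\sigma_m}(H;\bfS_m)$ becomes a Seiberg--Witten spectral invariant at parameter $r \asymp d_m$. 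Taubes' large-$r$ analysis of the Seiberg--Witten equations --- the mechanism underlying the three-dimensional Weinstein conjecture and the ECH Weyl law --- would then show that the min-max value realizing $\sigma_m$ is asymptotic to $r$ times the symplectic area of the underlying relative homology class, with error $o(r)$. Replacing $\phi$ by $\phi^H$ (equivalently, folding $dH\wedge dt$ into the structure) shifts that leading area term by exactly $d_m\int_{M_\phi}H\,\omega_\phi\wedge dt$, while the summand $\int_{\Theta_m}H\,dt$ is precisely the correction that cancels the $(\Gamma_m,\Theta_m)$-dependent affine constants in the difference $c_{\sigma_m}(H;\bfS_m) - c_{\sigma_m}(0;\bfS_m)$. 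Dividing by $d_m$ and sending $m \to \infty$ would give the claimed limit.

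The hard part will be making the Seiberg--Witten asymptotics both \emph{uniform} and \emph{two-sided}: the $o(r)$ error has to hold uniformly over the entire min-max family representing $\sigma_m$, independently of the choices of $\sigma_m$ and $\Theta_m$, and simultaneously for $\phi$ and for all the perturbed maps $\phi^H$ in play. This requires sharp a priori estimates on the spinor and curvature of approximate solutions together with a quantitative, action-respecting comparison of the Seiberg--Witten and PFH filtrations (an $\varepsilon$-approximate filtered quasi-isomorphism, as in the ECH case). A secondary but necessary point is the degenerate case: one must check that the $C^2$-limit definition of $c_\sigma(H;\bfS)$ is compatible with the limit $m \to \infty$, which follows from the uniform Lipschitz bound above by a diagonal argument. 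I do not expect a proof avoiding Seiberg--Witten to be available, since even the much softer ECH volume conjecture is not known by purely holomorphic-curve methods.
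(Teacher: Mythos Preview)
The paper does not prove Theorem~\ref{thm:weylLaw} at all: it is quoted verbatim from \cite[Theorem~1.5]{closingLemma21} and used as a black box, with the paper explicitly noting that the version in \cite{closingLemma21} is more general. So there is no ``paper's own proof'' to compare against; the present paper's contribution is the variational argument \emph{downstream} of the Weyl law (Propositions~\ref{prop:equidistribution} and~\ref{prop:equidistribution2}), not the Weyl law itself.

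That said, your sketch is a plausible outline of how the proof in \cite{closingLemma21} actually goes. The soft reductions you record from Proposition~\ref{prop:hoferContinuity} are correct and useful (uniform Lipschitz in $H$, exactness for $t$-only Hamiltonians, reduction to nondegenerate $\phi$ and $\phi^H$). The analytic core you propose --- passing through the Lee--Taubes isomorphism to Seiberg--Witten Floer theory and invoking large-$r$ asymptotics in the spirit of the ECH volume theorem --- is indeed the mechanism used in \cite{closingLemma21}, and your identification of the hard point (uniform two-sided $o(r)$ control independent of the choice of $\sigma_m$ and $\Theta_m$, via a filtered comparison of the two theories) is accurate. What you have written is a proof \emph{strategy}, not a proof: the quantitative estimates on the Chern--Simons--Dirac functional and the filtered quasi-isomorphism statements you allude to are substantial and occupy the bulk of \cite{closingLemma21}. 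But as a roadmap it is correct, and you are right that no holomorphic-curve-only argument is currently known.
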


The Weyl law in \cite{closingLemma21} is more general than the statement given above. For example, it allows one to compare the asymptotics of spectral invariants along arbitrary pairs of sequences $\{\sigma_m\}_{m \geq 1}$ and $\{\tau_m\}_{m \geq 1}$ of PFH classes for $\phi$ and $\phi^H$, respectively, at the cost of adding additional terms to the asymptotic formula depending on the $\bZ$-gradings of the PFH classes. We conclude the section by noting that the Weyl law requires the twisted PFH groups $\TWPFH_*(\phi, \Gamma_m, \Theta_m)$ to not vanish for all $m \gg 1$. This is guaranteed by the non-vanishing theorem proved in \cite{closingLemma21}. 

\begin{thm} \cite[Theorem $1.6$]{closingLemma21} \label{thm:nonVanishing}
Let $\phi \in \Diff(\Sigma, \omega)$ be a monotone, nondegenerate area-preserving map. Then for any class $\Gamma$ of sufficiently high degree, depending only on the Hamiltonian isotopy class of $\phi$, and any reference cycle $\Theta_{\text{ref}}$, the group $\TWPFH_*(\phi, \Gamma, \Theta_{\text{ref}})$ does not vanish. 
\end{thm}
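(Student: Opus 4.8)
The plan is to translate the statement into Seiberg--Witten theory and read off the nonvanishing from Taubes's work. I would begin with the Lee--Taubes isomorphism identifying $\TWPFH_*(\phi,\Gamma,\Theta_{\mathrm{ref}})$ with a version of the Seiberg--Witten Floer cohomology of $M_\phi$ in the spin$^c$ structure $\mathfrak s_\Gamma$ determined by $\Gamma$ and the data $(\omega_\phi, dt, V)$ on the mapping torus. Here it is essential that the \emph{twisting} in PFH --- the bookkeeping of the $2$-chains $W$ --- corresponds on the monopole side to a completed/local-coefficient system; this is precisely the feature that prevents these groups from being annihilated by the usual adjunction-type vanishing theorems for non-torsion $c_1$. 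Under this dictionary the trivialized reference cycle enters only the grading and module structure, not whether the group is zero, so it may be chosen freely, and it suffices to show the monopole group is nonzero once $\deg\Gamma$ is large.

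Next I would isolate the role of the Hamiltonian isotopy class. The mapping tori of Hamiltonian-isotopic maps are canonically diffeomorphic via the maps $M_H$ of \S\ref{subsec:mappingTorii}, and this identification transports $\Gamma$, the reference cycle and all the relevant geometric structures to their counterparts; hence $\TWPFH_*(\phi,\Gamma,\Theta_{\mathrm{ref}})$, as a $\bZ/2$-vector space, together with the degree threshold above which $c_1(\mathfrak s_\Gamma)$ becomes non-torsion, depends only on the isotopy class of $\phi$ and on $H^*(M_\phi)$ and $g(\Sigma)$. Fixing a convenient monotone representative $\phi_0$ and a cofinal sequence of negative monotone classes of increasing degree, it is therefore enough to prove nonvanishing for $\phi_0$ in high degree, and the resulting threshold is automatically uniform over the isotopy class, as the theorem claims.

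The heart of the matter is then the monopole nonvanishing for $M_{\phi_0}$ in these spin$^c$ structures. Here monotonicity is used decisively: for a negative monotone $\Gamma$ the class $c_1(\mathfrak s_\Gamma)$ is a negative multiple of $[\omega_{\phi_0}]$, so $\mathfrak s_\Gamma$ is adapted to perturbing the Seiberg--Witten equations on $M_{\phi_0}$ by a large positive multiple of the stable Hamiltonian structure $(\omega_{\phi_0}, dt)$. Running Taubes's analysis in this situation --- as in his proof of the Weinstein conjecture and of the $ECH = \widehat{HM}$ isomorphism --- the solutions for large perturbation parameter localize along the $R$-dynamics, the compactness and energy estimates survive because of monotonicity, and one extracts a distinguished nonzero class in the completed Floer group (the monopole counterpart of an ECH-type contact class, produced e.g.\ by a $U$-tower or continuation argument). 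Pushing this class back through the Lee--Taubes isomorphism proves $\TWPFH_*(\phi_0,\Gamma,\Theta_{\mathrm{ref}})\ne0$.

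I expect the decisive difficulty to be precisely this last step: importing Taubes's analytic package and, above all, checking that ``negative monotone'' is exactly the hypothesis that both makes twisted PFH well-defined and places $\mathfrak s_\Gamma$ in the regime --- the correct chamber and completed coefficient system --- in which the monopole Floer group is provably nonzero. A second, more clerical but still delicate, issue is tracking gradings, orientations and the coefficient system through the Lee--Taubes isomorphism so that the conclusion, transported back to the PFH side, is literally the statement above. By contrast, the reference-cycle independence and the reduction to a single representative per isotopy class should be routine.
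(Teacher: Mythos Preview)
The paper does not prove this statement at all: Theorem~\ref{thm:nonVanishing} is quoted verbatim from \cite[Theorem~1.6]{closingLemma21} and used as a black box, so there is no ``paper's own proof'' to compare against. Your sketch is a plausible outline of how the result is actually established in \cite{closingLemma21} --- via the Lee--Taubes isomorphism between twisted PFH and a completed Seiberg--Witten Floer cohomology, together with a nonvanishing argument on the monopole side --- and the points you flag as delicate (matching the negative monotone hypothesis to the correct perturbation chamber and coefficient system, and tracking the isomorphism through gradings and completions) are indeed where the work lies in that reference. For the purposes of the present paper, however, the correct ``proof'' is simply the citation.
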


\section{Supporting lemmas}\label{sec:lemmas} In this section, we state several supporting lemmas used to prove Theorem \ref{thm:main}. Proofs are given in \S\ref{sec:supportingProofs}

\subsection{Generic nondegeneracy of periodic orbits} The first two lemmas concern nondegeneracy of periodic orbits. The first lemma shows that any finite collection of simple periodic orbits of an area-preserving map can be made nondegenerate by an arbitrarily $C^\infty$-small Hamiltonian perturbation. 

\begin{lem}[Nondegenerate after small Hamiltonian perturbation] \label{lem:nonDegenerateAfterSmallPerturbation}
Fix $\phi \in \Diff(\Sigma, \omega)$ and let $S_1, \ldots, S_k$ be a finite set of simple periodic orbits of $\phi$. Then there exist arbitrarily $C^\infty$-small Hamiltonians $H \in C^\infty(\bR/\bZ \times \Sigma)$ such that each of $S_1, \ldots, S_k$ is a nondegenerate simple periodic orbit of the map $\phi^H$.
\end{lem}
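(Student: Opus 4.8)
The plan is to produce the desired Hamiltonian $H$ by patching together local Hamiltonian perturbations, one supported near the orbit of each point appearing in $S_1, \ldots, S_k$. First I would note that since the $S_j$ are simple and there are only finitely many of them, their point sets are finite; there may be coincidences between points of different orbits, so let $P = \{p_1, \ldots, p_m\}$ be the union of all the points appearing in $S_1 \cup \cdots \cup S_k$, and let $d$ be a common period, e.g.\ the least common multiple of the $|S_j|$, so that $\phi^d$ fixes every point of $P$. The nondegeneracy of a simple periodic orbit $S_j = \{x_1, \ldots, x_{d_j}\}$ is governed by the return map $D\phi^{d_j}(x_1)$, and it suffices to arrange that none of the return maps $D\phi^{d_j}(x)$, for $x$ a point of $S_j$, has $1$ as an eigenvalue; equivalently, using the chain rule, that $D\phi^{d}(p)$ has no eigenvalue equal to a $(d/d_j)$-th root of unity, but it is cleaner to work orbit by orbit.

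Next I would set up the local perturbation. Fix disjoint Darboux balls $U_1, \ldots, U_m$ around the points $p_1, \ldots, p_m$, small enough that $U_i, \phi(U_i), \ldots, \phi^{d-1}(U_i)$ are pairwise disjoint except for the forced returns (shrink so that the first return of $U_i$ to itself occurs only at time $d_j$ for the appropriate orbit, and the images stay inside the fixed Darboux charts). Now choose a time $t_0 \in (0,1)$ and, for each $i$, a smooth function $G_i$ on $\Sigma$ supported in a small neighborhood of $\phi^{1}_{\text{(flow near }t_0)}$-image of $p_i$ — more precisely, build a Hamiltonian $H \in C^\infty(\bR/\bZ\times\Sigma)$ that is supported in $(t_0 - \epsilon, t_0 + \epsilon) \times V$ for a small neighborhood $V$ of the first-return point, chosen so that the time-one map $\phi^1_H$ is the identity outside $V$ and inside $V$ equals the time-$\epsilon$ flow of a fixed autonomous Hamiltonian whose Hessian at the relevant point we can prescribe. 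The effect on the return map at $x_1 \in S_j$ is that $D(\phi^H)^{d_j}(x_1) = A \cdot D\phi^1_H(y) $ for the appropriate intermediate point $y$ and a fixed invertible matrix $A$ independent of the perturbation, and $D\phi^1_H(y)$ ranges over a neighborhood of the identity in $Sp(2,\bR)$ as we vary the quadratic part of $G_i$. Since $\{ M \in Sp(2,\bR) : 1 \in \operatorname{spec}(AM)\}$ is a proper real-analytic subvariety of $Sp(2,\bR)$, its complement is open and dense, so we can choose the perturbation arbitrarily $C^\infty$-small and still miss it; doing this simultaneously and independently in each chart $U_i$ (the supports are disjoint, so the perturbations do not interfere) makes all of $S_1, \ldots, S_k$ nondegenerate at once. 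Finally, applying the transformation $H \mapsto H_*$ from \S\ref{subsec:maps} if needed places $H$ in $\cH(\Sigma)$ without changing the time-one map, hence without changing the return maps; and since we never moved the points of $P$ (the perturbation is supported away from them at times near $t_0$, or one simply checks the orbits persist), $S_1, \ldots, S_k$ remain periodic orbits of $\phi^H$.

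The main obstacle I expect is bookkeeping the relationship between a perturbation inserted at a single intermediate time and its effect on the full return map, together with ensuring the perturbation can be taken supported in a region disjoint from all the relevant orbit points and their translates so that (a) the points of $P$ genuinely remain periodic orbits of $\phi^H$ and (b) the perturbations for different $i$ truly decouple. A clean way to handle (a) is to insert the perturbation along a short flow segment that does not contain any point of $P$, using that between consecutive visits the orbit spends positive time away from $P$; then $\phi^H$ agrees with $\phi$ on a neighborhood of each point of $P$ at the relevant times, so the orbits persist verbatim and only their linearizations change by the prescribed factor. The rest — density of the complement of the bad subvariety, $C^\infty$-smallness, and the $\cH(\Sigma)$ normalization — is routine.
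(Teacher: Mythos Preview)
Your core argument is correct and is essentially the paper's proof: support a Hamiltonian near a single point $x_1$ of the orbit, have it vanish to first order there so that $x_1$ remains fixed by $\phi^1_H$, and observe that the return map becomes $D\phi^{d_j}(x_1)\cdot D\phi^1_H(x_1)$ with the second factor ranging over a neighborhood of the identity in $Sp(2,\bR)$; the paper phrases this as $\rho(S,H^{M,s}) = \rho(S,H)\circ\exp(sM)$ for arbitrary $M\in\mathfrak{sp}(T_{x_1}\Sigma)$, which is the same mechanism.

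One caution about your obstacle paragraph: the ``clean way'' you propose there---supporting the perturbation on a flow segment that avoids all points of $P$ so that $\phi^H$ agrees with $\phi$ near $P$---would not work. If $\phi^H = \phi$ on a neighborhood of every $x_i$, then $D(\phi^H)^{d_j}(x_1) = D\phi^{d_j}(x_1)$ and nothing has changed. The perturbation must be supported in a spatial neighborhood $V$ containing an orbit point, with the Hamiltonian critical at that point; the orbit persists not because the support misses $P$ but because $dH_t$ vanishes at the point, exactly as in your second paragraph. Also, distinct simple periodic orbits cannot share a point (a simple orbit is determined by any one of its points), so the ``coincidences'' you allow for do not occur and the decoupling in (b) is automatic once the $U_i$ are chosen disjoint.
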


The second lemma is a parametric version of the statement that the set of nondegenerate area-preserving maps is generic in each Hamiltonian isotopy class. 

\begin{lem}[Parametric transversality] \label{lem:parametric}
Fix $N \geq 1$ and $\phi \in \Diff(\Sigma, \omega)$. Then for generic $H \in C^\infty([0,1]^N \times \bR/\bZ \times \Sigma)$, 
$$\text{measure}(\{\tau \in [0,1]^N\,|\,\phi^\tau\text{ is nondegenerate}\}) = 1.$$
\end{lem}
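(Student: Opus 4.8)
The plan is to deduce this from a parametric Sard--Smale transversality argument applied to the "nondegeneracy" condition, combined with a Fubini-type argument to pass from genericity of the total parameter behavior to full-measure genericity in the $[0,1]^N$ direction. First I would recall the standard fact (the non-parametric case, $N=0$) that within a single Hamiltonian isotopy class the nondegenerate maps form a $C^\infty$-residual set: this is proved by fixing a period $d$, considering the universal moduli space of period-$\le d$ periodic points over the space of Hamiltonians, checking that the relevant section (the graph of $\phi^H$ intersected with the diagonal, or the return map minus identity) is transverse to the appropriate stratum using the freedom to perturb $H$ near each orbit, and invoking Sard--Smale; then intersecting over all $d \in \bZ_{>0}$. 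Lemma \ref{lem:nonDegenerateAfterSmallPerturbation} already packages the key local surjectivity computation: a Hamiltonian perturbation supported near a periodic orbit can realize an arbitrary prescribed jet of the return map, hence can kill the eigenvalue-$1$ condition transversally.

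Next I would set up the parametric version properly. Consider the map $F \colon C^\infty([0,1]^N \times \bR/\bZ \times \Sigma) \times [0,1]^N \times \Sigma \times \bZ_{>0} \to \cdots$ encoding, for each $d$, the conditions "$x$ is a period-$d$ point of $\phi^\tau$" and "$D(\phi^\tau)^d(x)$ has $1$ as an eigenvalue." Using the same local perturbation mechanism as above—now perturbing $H$ in a neighborhood of $\{\tau_0\} \times \bR/\bZ \times \Sigma$—one shows the universal space $\mathcal{Z}_d = \{(H,\tau,x) : x \text{ is a degenerate period-}d\text{ point of }\phi^\tau\}$ is (after suitable Banach-space completions and a bootstrap back to $C^\infty$) a submanifold of positive codimension, in fact of codimension $\ge 1$ in the $\{H\} \times [0,1]^N$ slice. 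The Sard--Smale theorem applied to the projection $\mathcal{Z}_d \to C^\infty$ then shows that for generic $H$, the fiber $\mathcal{Z}_d \cap (\{H\} \times [0,1]^N \times \Sigma)$ is a manifold of dimension $< N$, hence its image under projection to $[0,1]^N$ has measure zero. Taking the countable intersection over $d$ preserves genericity, and the countable union of the measure-zero bad sets is still measure zero; the complement is exactly $\{\tau : \phi^\tau \text{ is nondegenerate}\}$, which therefore has full measure.

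The technical scaffolding—choosing Banach manifolds of $C^k$ Hamiltonians, verifying the linearized operator is Fredholm and surjective, and running the standard "Taubes trick" to recover the $C^\infty$ statement from the $C^k$ ones—is routine but tedious, so I would either cite the analogous arguments in the PFH/closing-lemma literature or relegate them to the proof in \S\ref{sec:supportingProofs}. The main obstacle, and the step deserving genuine care, is the transversality computation for the \emph{degeneracy} locus: one must confirm that a Hamiltonian perturbation localized near a given periodic orbit of $\phi^{\tau_0}$ can move the Poincaré return map in enough directions to make the "eigenvalue $1$" condition cut out transversally, uniformly as the orbit and $\tau_0$ vary, and one must handle the possibility of infinitely many or non-isolated periodic orbits (which is why one works period-by-period and uses that for each fixed $d$ the degenerate period-$d$ points, generically, already form a lower-dimensional set). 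This is essentially the content already extracted in Lemma \ref{lem:nonDegenerateAfterSmallPerturbation}, upgraded to a family; reusing that lemma's perturbation construction parametrically is the crux.
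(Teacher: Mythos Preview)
Your proposal is correct and follows essentially the same route as the paper: set up, for each period $d$, a universal moduli space of periodic orbits over a Banach space of $C^l$ Hamiltonians, show by local Hamiltonian perturbation near an orbit that the degeneracy locus is a codimension-$\geq 1$ Banach submanifold, apply Sard--Smale to the projection to Hamiltonians so that the bad set over a generic $H$ has dimension $<N$ and hence measure-zero image in $[0,1]^N$, intersect over $d$, and pass from $C^l$ to $C^\infty$ by an open-dense argument. The only cosmetic difference is that the paper organizes the moduli space using \emph{simple} orbits and encodes higher-period degeneracy via roots of unity of the return map, whereas you work directly with fixed points of $(\phi^\tau)^d$ and the eigenvalue-$1$ condition; these are equivalent bookkeeping choices.
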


\subsection{A fundamental computation} The next lemma addresses the properties of PFH spectral invariants which are differentiable with respect to a finite-dimensional family of Hamiltonian perturbations. 

\begin{lem}[Derivatives of PFH spectral invariants] \label{lem:derivatives}
Fix $N \geq 1$, $\tau^0 \in (0,1)^N$, and a data set $\bfS = (\phi, \Gamma, \Theta_{\text{ref}})$ such that $\phi$ is nondegenerate and $\Gamma$ has degree $d$. Fix a nonzero class $\sigma \in \TWPFH(\phi, \Gamma, \Theta_{\text{ref}})$. Assume that $\phi^{\tau^0}$ is nondegenerate and the function
$$\tau \mapsto c_{\sigma}(H^\tau; \bfS)$$
is differentiable at $\tau^0$. Then there exists an orbit set $\cO \in \cP_{\bZ}(\phi^{\tau^0})$ with $|\cO| = d$ and a class $W \in H_2(M_\phi, \Theta_{\cO}, \Theta_{\text{ref}}; \bZ)$ such that 
$$c_\sigma(H^{\tau^0}; \bfS) = \cA(\cO, W)$$
and for any $i \in \{1, \ldots, N\}$, 
$$\partial_{i}c_\sigma(H^{\tau}; \bfS)(\tau^0) = (\int_{\Theta_{\cO}} - \int_{\Theta_{\text{ref}}}) (\partial_{i} H^\tau(\tau^0) dt).$$
\end{lem}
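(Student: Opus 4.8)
\textbf{Proof proposal for Lemma \ref{lem:derivatives}.}

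The plan is to combine the spectrality property (Proposition \ref{prop:spectrality}) with the Hofer--Lipschitz continuity estimate (Proposition \ref{prop:hoferContinuity}) in a standard ``max--min''-type argument for spectral invariants. First I would invoke Proposition \ref{prop:spectrality} applied to the Hamiltonian $H^{\tau^0}$: there is an integral orbit set $\cO \in \cP_{\bZ}(\phi^{\tau^0})$ with $[\cO] = \Gamma$ (hence $|\cO| = d$) and a class $W \in H_2(M_\phi, \Theta_{\cO}, \Theta_{\text{ref}}; \bZ)$ with $\bfA(\cO, W) = c_\sigma(H^{\tau^0}; \bfS)$. This immediately gives the first assertion. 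The content of the lemma is the formula for the partial derivatives, and the key point is that the \emph{same} generator $(\cO, W)$ controls the first-order behavior of the spectral invariant in every coordinate direction.

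The main step is to bound the difference quotient from both sides using Proposition \ref{prop:hoferContinuity} with $K = H^{\tau^0}$ and $H = H^{\tau^0 + s e_i}$ for small $s$, where $e_i$ is the $i$-th standard basis vector. The proposition gives
$$
d\int_{\bR/\bZ} \min (H^{\tau^0 + s e_i} - H^{\tau^0})_t\, dt \le c_\sigma(H^{\tau^0 + s e_i}; \bfS) - c_\sigma(H^{\tau^0}; \bfS) + \int_{\Theta_{\text{ref}}}(H^{\tau^0 + s e_i} - H^{\tau^0})\,dt \le d\int_{\bR/\bZ} \max (H^{\tau^0 + s e_i} - H^{\tau^0})_t\, dt.
$$
Dividing by $s$, letting $s \to 0^{\pm}$, and using that $(H^{\tau^0+se_i}-H^{\tau^0})/s \to \partial_i H^\tau(\tau^0)$ in $C^0$ (so both the min and max converge to the corresponding integrals of $\partial_i H^\tau(\tau^0)$ against $dt$ over $M_\phi$, which need not yet equal the $\Theta_{\cO}$-integral), one obtains that the derivative, if it exists, lies between $d\int_{\bR/\bZ}\min(\partial_i H^\tau(\tau^0))_t\,dt - \int_{\Theta_{\text{ref}}}\partial_i H^\tau(\tau^0)\,dt$ and the analogous expression with max. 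To pin down the exact value I would instead use the finer information coming from spectrality at the nearby parameter: the generator realizing $c_\sigma(H^{\tau^0+se_i};\bfS)$ can be compared with $(\cO,W)$, and conversely $(\cO,W)$ itself (reinterpreted as a generator for $\phi^{\tau^0+se_i}$ via the mapping-torus identification $M_{H^{\tau^0+se_i}}\circ M_{H^{\tau^0}}^{-1}$) gives an upper bound $c_\sigma(H^{\tau^0+se_i};\bfS)\le \bfA$ of that reinterpreted generator; expanding $\bfA$ as a function of $s$ and differentiating yields exactly the term $\int_{\Theta_{\cO}}\partial_i H^\tau(\tau^0)\,dt - \int_{\Theta_{\text{ref}}}\partial_i H^\tau(\tau^0)\,dt$. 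Matching this upper bound against the two-sided estimate from Proposition \ref{prop:hoferContinuity} forces the derivative to equal $(\int_{\Theta_{\cO}} - \int_{\Theta_{\text{ref}}})(\partial_i H^\tau(\tau^0)\,dt)$.

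The hard part will be the bookkeeping in the ``reinterpretation'' step: one must verify that pushing the generator $(\cO,W)$ forward under the diffeomorphism of mapping tori induced by the Hamiltonian path from $H^{\tau^0}$ to $H^{\tau^0+se_i}$ produces a genuine PFH generator for $\phi^{\tau^0+se_i}$ whose action depends on $s$ in the claimed way, with the $\Theta_{\text{ref}}$-correction term accounting for the non-canonical nature of the identification. The action change under such an isotopy is computed by integrating the variation of the closed two-form $\omega_\phi$ over the $2$-chain $W$, which by Stokes' theorem reduces to boundary integrals over $\Theta_{\cO}$ and $\Theta_{\text{ref}}$ of the relevant primitive, namely the Hamiltonian times $dt$; this is precisely the kind of computation underlying Proposition \ref{prop:hoferContinuity} itself, so I expect it to go through by the same mechanism, carried out infinitesimally. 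A subtlety to watch is that the generator $(\cO,W)$ need not be a cycle, only that it appears in a cycle representing $\sigma$ with action $\le c_\sigma$; but since $\bfA(\cO,W) = c_\sigma(H^{\tau^0};\bfS)$ exactly, $(\cO,W)$ is an action-maximizing generator in such a cycle, which is all that is needed for the one-sided comparison to be sharp.
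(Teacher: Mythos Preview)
Your proposal has a genuine gap in the ``reinterpretation'' step. The map $M_{H^{\tau^0+se_i}}\circ M_{H^{\tau^0}}^{-1}$ is merely a diffeomorphism of three-manifolds; it does \emph{not} carry periodic orbits of the canonical vector field on $M_{\phi^{\tau^0}}$ to periodic orbits of the canonical vector field on $M_{\phi^{\tau^0+se_i}}$. Hence the pushforward of $\Theta_{\cO}$ is not an orbit set of $\phi^{\tau^0+se_i}$, the pair $(\cO,W)$ does not become a PFH generator for the perturbed map, and there is no reason its ``action'' should bound $c_\sigma(H^{\tau^0+se_i};\bfS)$ from either side. This is a real difference from Hamiltonian Floer homology, where the action functional is defined on the entire loop space rather than only on periodic orbits. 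For the same reason your closing remark about $(\cO,W)$ being ``action-maximizing in a cycle'' does not help: that cycle is a cycle in the complex for $\phi^{\tau^0}$, not for $\phi^{\tau^0+se_i}$. Separately, the Hofer--Lipschitz inequality only traps the derivative in the interval between $d\int_{\bR/\bZ}\min(\partial_i H^\tau(\tau^0))_t\,dt$ and $d\int_{\bR/\bZ}\max(\partial_i H^\tau(\tau^0))_t\,dt$; nothing in your outline supplies the matching lower bound needed to force equality with the $\Theta_{\cO}$-integral.

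The paper fills this gap by using the nondegeneracy of $\phi^{\tau^0}$ in an essential way you do not invoke: each action-realizing generator $(\cO^i,W^i)$ at $\tau^0$ is \emph{continued} to a smooth family $(\cO^i_\tau,W^i_\tau)$ of genuine PFH generators for $\phi^\tau$, $\tau$ near $\tau^0$. A compactness/spectrality argument (Lemma \ref{lem:spectrality}) then shows $c_\sigma(H^\tau;\bfS)\in\{\bfA(\cO^i_\tau,W^i_\tau)\}_i$ for all nearby $\tau$. A direct Stokes computation---close in spirit to your final paragraph, but applied to the moving chain $W^i_\tau$ rather than to a fixed $W$---gives $\partial_s|_{s=0}\bfA(\cO^i_{\tau^0+s\tau},W^i_{\tau^0+s\tau})=(\int_{\Theta_{\cO^i}}-\int_{\Theta_{\text{ref}}})(\partial_s|_{s=0}H^{\tau^0+s\tau}\,dt)$; the contribution from varying $W^i_\tau$ vanishes because $\Theta_{\cO^i}$ is tangent to the kernel of $\omega_\phi+dH^{\tau^0}\wedge dt$. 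Since $c_\sigma(H^\tau;\bfS)$ is differentiable at $\tau^0$ and, near $\tau^0$, coincides with one of finitely many smooth functions all agreeing at $\tau^0$, its gradient must equal $\nabla_\tau\bfA(\cO^i_\tau,W^i_\tau)|_{\tau^0}$ for some single $i$, which yields the lemma.
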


\subsection{A technical lemma of Marques--Neves--Song} The final lemma is a technical result regarding Lipschitz functions, originally due to Marques--Neves--Song in their work on equidistribution of minimal hypersurfaces in Riemannian manifolds. 

\begin{lem} \label{lem:MNS} \cite{Irie21, MarquesNevesSong}
For any real number $\delta > 0$ and integer $N \geq 1$, there exists $\epsilon = \epsilon(\delta, N) > 0$ such that the following holds. For any Lipschitz function $f$ with $\max(f) - \min(f) \leq 2\epsilon$ and a full measure subset $E \subset [0,1]^N$, there exists $N + 1$ sequences $\{\tau^{1,k}\}_{k\geq 1}, \ldots, \{\tau^{N+1,k}\}_{k\geq1}$ in $E$ satisfying the following conditions:
\begin{itemize}
    \item There exists $\tau^\infty \in (0, 1)^N$ such that 
    $$\lim_{k \to \infty} \tau^{j,k} = \tau^\infty$$
    for any $j = 1, \ldots, N+1$. 
    \item $f$ is differentiable at $\tau^{j,k}$ for any $j$ and any $k$.
    \item For any $j = 1, \ldots, N+1$, the limit
    $$v^j = \lim_{k \to \infty} (\nabla f)(\tau^{j,k}) \in \mathbb{R}^N$$
    exists.
    \item There is a point $v$ in the convex hull of $\{v^1, \ldots, v^{N+1}\}$ of distance at most $\delta$ from the origin. 
\end{itemize}
\end{lem}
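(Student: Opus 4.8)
The plan is to reduce the statement to a covering/compactness argument about the gradient map of a Lipschitz function whose oscillation is small, and then invoke a clean pigeonhole-type observation: if the image of the gradient map (on its full-measure domain of differentiability) is far from the origin in some strong quantitative sense, then the oscillation of $f$ would have to be large. First I would recall that, by Rademacher's theorem, $f$ is differentiable almost everywhere, so $E$ may be shrunk to a full-measure set $E'$ of differentiability points without loss; all the sequences will be drawn from $E'$. The content is to arrange the $N+1$ sequences to converge to a \emph{common} interior point $\tau^\infty$, with convergent gradients $v^1,\dots,v^{N+1}$, whose convex hull meets the $\delta$-ball about $0$.

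The key step is the following dichotomy, argued by contradiction and a Vitali/Besicovitch covering argument. Suppose the conclusion fails for a given $f$ with $\max f-\min f\le 2\epsilon$. Then for every interior point $p$ and every sequence in $E'$ converging to $p$ with convergent gradients, the resulting convex hull of any $N+1$ such limit vectors avoids the open $\delta$-ball about the origin. By a standard separation/compactness argument this forces, for each interior $p$, the existence of a unit vector $u(p)$ and a radius $r(p)>0$ such that $\langle \nabla f(x), u(p)\rangle \ge \delta/(N+1)$ (say) for a.e. $x$ in the ball $B(p,r(p))$ — otherwise one could extract $N+1$ directions spanning a hull through $0$, using that $0$ lies in the convex hull of $v^1,\dots,v^{N+1}$ iff no hyperplane through $0$ strictly separates them, which by Carathéodory needs at most $N+1$ vectors. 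Integrating $\nabla f$ along segments inside such a ball shows $f$ increases at a definite rate in the direction $u(p)$ across a segment of length comparable to $r(p)$; covering $[0,1]^N$ economically by such balls and chaining the increments yields a lower bound $\max f - \min f \ge c(N)\,\delta$ for an explicit dimensional constant $c(N)>0$. Choosing $\epsilon = \epsilon(\delta,N) := c(N)\delta/2$ contradicts $\max f-\min f\le 2\epsilon$, proving the lemma; one then extracts the sequences and their common limit point $\tau^\infty$ by compactness of $[0,1]^N$ together with the fact that the separating balls cannot all cover $[0,1]^N$, so some interior point admits approaching sequences with the hull property. A minor technical point is to ensure $\tau^\infty$ lies in the open cube $(0,1)^N$ rather than on the boundary, which is handled by noting the covering argument localizes near an interior accumulation point.

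The main obstacle I expect is the bookkeeping in the covering-and-chaining step: one must convert the ``convex hull avoids $B_\delta(0)$'' hypothesis into a \emph{uniform} directional-monotonicity statement valid on balls of controlled size, and then integrate these local increments into a global oscillation bound without the constants degenerating. This is exactly the delicate estimate carried out by Marques--Neves--Song; since the lemma is quoted from \cite{MarquesNevesSong} and \cite{Irie21}, I would in practice cite their argument verbatim rather than reproduce it, and only verify that the hypotheses there (Lipschitz $f$, small oscillation, full-measure $E$) match the statement used here and that interior convergence of the $\tau^{j,k}$ is genuinely provided.
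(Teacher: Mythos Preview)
The paper does not prove this lemma at all: it is stated with the citation \cite{Irie21, MarquesNevesSong} and Section~\ref{sec:supportingProofs} only treats Lemmas~\ref{lem:nonDegenerateAfterSmallPerturbation}, \ref{lem:parametric}, and \ref{lem:derivatives}. Your final decision to ``cite their argument verbatim rather than reproduce it'' therefore matches the paper's treatment exactly.

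That said, the sketch you offer before deferring to the citation has a genuine gap. The passage from ``for every interior $p$ the Clarke subdifferential $\partial f(p)$ misses $B_\delta(0)$'' to a global oscillation bound via ``covering $[0,1]^N$ economically by such balls and chaining the increments'' does not work as stated: the separating direction $u(p)$ depends on $p$, so the local increments $f(p+r(p)u(p))-f(p)\gtrsim r(p)\delta$ cannot be summed along a single path to force $\max f-\min f\ge c(N)\delta$. In two or more dimensions the field $p\mapsto u(p)$ can rotate, and there is no obvious way to concatenate these directional gains. The argument in \cite{Irie21, MarquesNevesSong} avoids this entirely: instead of assuming failure at \emph{every} interior point and globalizing, one produces a \emph{single} interior point $\tau^\infty$ (for instance a minimizer of $f$ on a subcube, or of $f$ plus a small quadratic penalty centered at an interior point) and uses the small-oscillation hypothesis once to show that $\partial f(\tau^\infty)$ already meets $\overline{B_\delta(0)}$. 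Carath\'eodory applied to the convex set $\partial f(\tau^\infty)$, which by definition is the convex hull of limits of $\nabla f$ along sequences in $E'$ converging to $\tau^\infty$, then furnishes the $N+1$ sequences. If you want to include a sketch rather than a bare citation, this direct local argument is the one to outline.
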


\section{Generic equidistribution}\label{sec:thmProof} This section proves Theorems \ref{thm:generic} and \ref{thm:main}.

\subsection{The main propositions} Theorems \ref{thm:generic} and \ref{thm:main} depend on the following propositions. 

\begin{prop} \label{prop:equidistribution}
Fix any monotone $\phi \in \Diff(\Sigma, \omega)$. Fix any integer $N \geq 1$, $\epsilon > 0$, and any finite subset $\{F_i\}_{i = 1}^N$ of $\cH(\Sigma)$. Then for any non-empty open set $U \subset \cH(\Sigma)$, there exists $H \in U$ and an orbit set $\cO \in \cP_{\bR}(\phi^H)$ such that, for any $i \in \{1, \ldots, N\}$,
$$|\frac{1}{|\cO|}\int_{\Theta_{\cO}} F_i dt - \int_{M_\phi} F_i \omega_\phi \wedge dt| < \epsilon.$$
\end{prop}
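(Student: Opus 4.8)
The plan follows the variational scheme of Marques--Neves--Song and Irie. We may assume $\phi$ is nondegenerate: nondegenerate maps are $C^\infty$-dense in the Hamiltonian isotopy class of $\phi$ (as used in \S\ref{subsec:pfh}), monotonicity depends only on that class, and replacing $\phi$ by $\phi\circ\phi^1_K$ for small $K\in\cH(\Sigma)$ carries $U$ to another nonempty open subset of $\cH(\Sigma)$ while leaving $\int_{M_\phi}F_i\,\omega_\phi\wedge dt=\int_0^1\!\int_\Sigma (F_i)_t\,\omega\,dt$ unchanged. Fix $H_\ast\in U$ and $s>0$ small enough that the affine family $H^{\mathrm{aff},\tau}:=H_\ast+s\sum_{i=1}^N(\tau_i-\tfrac12)F_i$ lies in $U$ for all $\tau\in[0,1]^N$; note $H^{\mathrm{aff}}\in\cH^N(\Sigma)$. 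Put $\delta:=s\epsilon/4$, let $\epsilon_0=\epsilon(\delta,N)$ be given by Lemma~\ref{lem:MNS}, and use Lemma~\ref{lem:parametric} (together with the reparametrization of \S\ref{subsec:maps} that returns a perturbation to $\cH^N(\Sigma)$) to replace $H^{\mathrm{aff}}$ by $H\in\cH^N(\Sigma)$, arbitrarily $C^\infty$-close to it, such that $H^\tau\in U$ for every $\tau\in[0,1]^N$, the set $\{\tau:\phi^\tau\text{ is nondegenerate}\}$ has full measure, and --- by taking the perturbation small --- both $\gamma:=\sup_{\tau,i}\|\partial_{\tau_i}H^\tau-sF_i\|_{C^0}$ and the oscillation of $\tau\mapsto\int_{M_\phi}H^\tau\omega_\phi\wedge dt-s\sum_i\tau_i\int_{M_\phi}F_i\,\omega_\phi\wedge dt$ on $[0,1]^N$ are as small as we like (in particular $\gamma/s<\epsilon/4$).

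Using monotonicity of $\phi$ and Theorem~\ref{thm:nonVanishing}, fix negative monotone classes $\Gamma_m$ with degrees $d_m\nearrow\infty$, reference cycles $\Theta_m$, and nonzero $\sigma_m\in\TWPFH_*(\phi,\Gamma_m,\Theta_m)$; write $\bfS_m=(\phi,\Gamma_m,\Theta_m)$, $v^\ast_i:=\int_{M_\phi}F_i\,\omega_\phi\wedge dt$, and
$$f_m(\tau):=\frac{1}{d_m}\Big(c_{\sigma_m}(H^\tau;\bfS_m)+\int_{\Theta_m}H^\tau\,dt\Big).$$
Proposition~\ref{prop:hoferContinuity} applied to $H^\tau,H^{\tau'}$ shows the $f_m$ are uniformly Lipschitz; the Weyl law (Theorem~\ref{thm:weylLaw}) shows $f_m(\tau)-d_m^{-1}c_{\sigma_m}(0;\bfS_m)\to\int_{M_\phi}H^\tau\omega_\phi\wedge dt$ pointwise, hence uniformly by equicontinuity, so $u_m(\tau):=f_m(\tau)-d_m^{-1}c_{\sigma_m}(0;\bfS_m)-s\sum_i v^\ast_i\tau_i$ converges uniformly to a function whose oscillation is $\le\epsilon_0$. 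Fix $m$ with $\mathrm{osc}(u_m)\le2\epsilon_0$. At any $\tau^0\in(0,1)^N$ where $\phi^{\tau^0}$ is nondegenerate and $f_m$ is differentiable, Lemma~\ref{lem:derivatives} (using $\partial_{\tau_i}H^\tau=sF_i+O(\gamma)$ and $\int_{\Theta_\cO}dt=\int_{\Theta_m}dt=d_m$) yields $\cO\in\cP_{\bZ}(\phi^{\tau^0})$ with $|\cO|=d_m$ and
$$\frac{1}{|\cO|}\int_{\Theta_\cO}F_i\,dt=\frac1s\,\partial_{\tau_i}f_m(\tau^0)+O(\gamma/s)=v^\ast_i+\frac1s\,(\nabla u_m(\tau^0))_i+O(\gamma/s).$$

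Apply Lemma~\ref{lem:MNS} to $u_m$ (Lipschitz, oscillation $\le2\epsilon_0$) and the full-measure set $E=\{\tau\in(0,1)^N:\phi^\tau\text{ nondegenerate},\ f_m\text{ differentiable at }\tau\}$ (full measure by the above and Rademacher): this produces $\tau^{j,k}\in E$ ($j=1,\dots,N+1$) with $\tau^{j,k}\to\tau^\infty\in(0,1)^N$, $\nabla u_m(\tau^{j,k})\to v^j$, and $v=\sum_j\lambda_j v^j$ in their convex hull with $|v|\le\delta$. The previous paragraph gives $\cO^{j,k}\in\cP_{\bZ}(\phi^{\tau^{j,k}})$ with $|\cO^{j,k}|=d_m$ and $d_m^{-1}\int_{\Theta_{\cO^{j,k}}}F_i\,dt\to v^\ast_i+\tfrac1s v^j_i+O(\gamma/s)$ as $k\to\infty$. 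Since $\phi^{\tau^{j,k}}\to\phi^{\tau^\infty}$ in $C^\infty$ and the $\cO^{j,k}$ have total period $d_m$, after passing to a subsequence each $\cO^{j,k}$ converges to an orbit set $\cO^j_\infty\in\cP_{\bR}(\phi^{\tau^\infty})$ with $|\cO^j_\infty|=d_m$ and $d_m^{-1}\int_{\Theta_{\cO^j_\infty}}F_i\,dt=v^\ast_i+\tfrac1s v^j_i+O(\gamma/s)$. Then $\cO:=\sum_j\lambda_j\cO^j_\infty\in\cP_{\bR}(\phi^{\tau^\infty})$ has $|\cO|=d_m$ and
$$\Big|\frac{1}{|\cO|}\int_{\Theta_\cO}F_i\,dt-\int_{M_\phi}F_i\,\omega_\phi\wedge dt\Big|=\Big|\frac1s v_i+O(\gamma/s)\Big|\le\frac{\delta}{s}+\frac{\gamma}{s}<\epsilon,$$
so $H:=H^{\tau^\infty}\in U$ together with this $\cO$ satisfies the conclusion.

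The main obstacle is the compactness step: one must show that a sequence of orbit sets of the varying maps $\phi^{\tau^{j,k}}$, with periods bounded by $d_m$, subconverges to an honest orbit set of the single limit map $\phi^{\tau^\infty}$ --- accounting for the fact that limits of simple periodic orbits may be non-simple and therefore must be rewritten as positive real multiples of genuine simple orbits --- and that the functionals $\int_{\Theta_{(\cdot)}}F_i\,dt$ and $|\cdot|$ pass continuously to this limit. A secondary difficulty is organizing the nested parameter choices ($\epsilon\to s\to\delta\to\epsilon_0\to$ size of the nondegeneracy perturbation $\to m$) so that the perturbed family $H$ is simultaneously $U$-valued, generic in the sense of Lemma~\ref{lem:parametric}, and $C^1$-close enough to affine that the error terms $O(\gamma/s)$ and $\mathrm{osc}(u_m)$ are absorbed.
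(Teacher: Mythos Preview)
Your argument is correct and follows essentially the same route as the paper's proof: build a nearly-affine $N$-parameter family in $U$, invoke parametric transversality and the reparametrization trick to land in $\cH^N(\Sigma)$, apply the Weyl law to make a Lipschitz ``error'' function on $[0,1]^N$ eventually small in oscillation, feed it to the Marques--Neves--Song lemma, read off orbit sets from Lemma~\ref{lem:derivatives} at the resulting points, and pass to a convex combination in the limit via compactness. The only cosmetic differences are your choice to subtract the affine part $s\sum_i v^\ast_i\tau_i$ explicitly (the paper instead works with $e_m(H^\tau)$, which already tends to zero) and your taking limits before the convex combination rather than after; neither affects the substance.
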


Proposition \ref{prop:equidistribution} can be summarized as the fact that any monotone map $\phi$ admits a $C^\infty$-small Hamiltonian perturbation which has a ``nearly equidistributed'' orbit set in the mapping torus $M_\phi$. The following proposition, which is a consequence of Proposition \ref{prop:equidistribution}, shows that one can also produce nearly equidistributed orbit sets on the surface $\Sigma$. 

\begin{prop}
\label{prop:equidistribution2} Fix any monotone $\phi \in \Diff(\Sigma, \omega)$. Fix any integer $N \geq 1$, $\epsilon > 0$, and any finite subset $\{f_i\}_{i = 1}^N$ of $C^\infty(\Sigma)$. Then for any non-empty open set $U \subset \cH(\Sigma)$, there exists $H \in U$ and an orbit set $\cO \in \cP_{\bR}(\phi^H)$ such that, for any $i \in \{1, \ldots, N\}$,
$$|\frac{1}{|\cO|}\cO(f_i) - \int_{\Sigma} f_i \omega| < \epsilon.$$
\end{prop}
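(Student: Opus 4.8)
### Proof proposal for Proposition \ref{prop:equidistribution2}

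The plan is to deduce Proposition \ref{prop:equidistribution2} from Proposition \ref{prop:equidistribution} by lifting the functions $f_i$ from $\Sigma$ to the mapping torus $M_\phi$. The key observation is that a function on $\Sigma$ has a natural ``pullback'' to a function on $M_\phi$ which is constant in the $\bR/\bZ$-direction, and that the sum of $f_i$ over an orbit set on $\Sigma$ is directly comparable to the integral of this lift over the corresponding orbit set of the Reeb vector field on $M_\phi$.

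More precisely, given $f_i \in C^\infty(\Sigma)$, first modify it so it can be viewed on the mapping torus: let $F_i(t,x) = \beta'(t) f_i(x)$, where $\beta: [0,1] \to [0,1]$ is the smooth increasing reparametrization with $\beta \equiv 0$ near $0$ and $\beta \equiv 1$ near $1$ used in \S\ref{subsec:maps}, so that $F_i \in \cH(\Sigma)$ and $F_i$ descends to a smooth function on $M_\phi$. The point of the weight $\beta'(t)$ is twofold: it makes $F_i$ vanish near $t = 0, 1$ so that $F_i \in \cH(\Sigma)$, and it arranges that $\int_{\bR/\bZ} F_i(t,x)\, dt = f_i(x)$ for every $x$. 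Then I compute the two quantities appearing in Proposition \ref{prop:equidistribution} for this choice of $F_i$. On one hand, $\int_{M_\phi} F_i\, \omega_\phi \wedge dt = \int_{\Sigma}\left(\int_{\bR/\bZ} F_i(t,x)\,dt\right)\omega = \int_\Sigma f_i\,\omega$, since $\omega_\phi$ restricts to $\omega$ on the fibers. On the other hand, for an orbit set $\cO = \sum_k a_k S_k$ of $\phi^H$ with $S_k = \{x_{k,1}, \dots, x_{k,d_k}\}$, the corresponding orbit $\Theta_{S_k}$ of $R$ in $M_\phi$ is, up to the identification $(1,p) \sim (0, \phi^H(p))$, the concatenation of the vertical segments $t \mapsto (t, \phi^t_H \text{-flowed images})$; integrating $F_i\,dt$ along $\Theta_{S_k}$ and using $\int_{\bR/\bZ} F_i(t, \cdot)\,dt = f_i$ together with the fact that each point of $S_k$ (rather: each point of the $\phi$-orbit on the surface) contributes exactly once, one gets $\int_{\Theta_{\cO}} F_i\, dt = \cO(f_i)$. (Here one must be careful that the orbit set output of Proposition \ref{prop:equidistribution} is an orbit set of $\phi^H$, i.e. relative to the perturbed map, and the identification of $\Theta_{S_k}$ with a loop in $M_\phi$ is via the diffeomorphism $M_H$; this just relabels points and does not affect the integral, since $F_i$ is pulled back from $\Sigma$ in a way compatible with the Hamiltonian flow up to the $\beta'$ weight.) Finally $|\cO| = \sum_k a_k |S_k| = \sum_k a_k d_k$ is unchanged.

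With these two identities in hand, apply Proposition \ref{prop:equidistribution} with $N$, $\epsilon$, the same open set $U$, and the finite subset $\{F_i\}_{i=1}^N \subset \cH(\Sigma)$ constructed above. This yields $H \in U$ and an orbit set $\cO \in \cP_\bR(\phi^H)$ with
$$\left|\frac{1}{|\cO|}\int_{\Theta_\cO} F_i\, dt - \int_{M_\phi} F_i\, \omega_\phi \wedge dt\right| < \epsilon$$
for each $i$, which by the two computations above is exactly
$$\left|\frac{1}{|\cO|}\cO(f_i) - \int_\Sigma f_i\, \omega\right| < \epsilon,$$
as desired.

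The main obstacle I expect is bookkeeping rather than conceptual: verifying cleanly that $\int_{\Theta_\cO} F_i\, dt = \cO(f_i)$ under the mapping-torus identification. One has to track the effect of the reparametrization $\beta$, confirm that the contribution of the Hamiltonian flow $\phi^t_H$ washes out because $F_i$ depends only on the $\Sigma$-coordinate (after the $\beta'$ weighting, the $t$-integral of $F_i$ along any lift of a point of the orbit equals $f_i$ at that point), and make sure the identification $M_H: M_\phi \to M_{\phi^H}$ is applied consistently so that ``$\int_{\Theta_\cO}$'' means the same thing on both sides. None of this is deep, but it is the step where a sign or factor could go wrong, so I would write it out carefully, perhaps first for a single embedded periodic orbit and then extend by linearity to a general orbit set.
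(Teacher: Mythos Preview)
There is a genuine gap in the claimed identity $\int_{\Theta_{\cO}} F_i\,dt = \cO(f_i)$. As noted at the start of the proof of Lemma~\ref{lem:derivatives}, the loop $\Theta_{\cO} \subset M_\phi$ associated to an orbit set $\cO$ of $\phi^H$ is tangent to $R + X_H$, not to $R$ alone. Concretely, the lift to $[0,1]\times\Sigma$ of the arc of $\Theta_{\cO}$ through a periodic point $x_0$ of $\phi^H$ is $t\mapsto(t,\phi_H^t(x_0))$, not the vertical segment $t\mapsto(t,x_0)$. Hence
\[
\int_{\Theta_{\cO}} F_i\,dt \;=\; \sum_k a_k \sum_j \int_0^1 \beta'(t)\, f_i\bigl(\phi_H^t(x_{k,j})\bigr)\,dt,
\]
which differs from $\cO(f_i)=\sum_k a_k\sum_j f_i(x_{k,j})$ whenever $f_i$ varies along the Hamiltonian trajectories. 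Your $F_i(t,x)=\beta'(t)f_i(x)$ is \emph{not} compatible with the Hamiltonian flow in the sense you suggest; the function that is, namely $G_i^H(t,x)=\beta'(t)f_i\bigl((\phi_H^t)^{-1}(x)\bigr)$, does satisfy $\int_{\Theta_{\cO}}G_i^H\,dt = \cO(f_i)$ exactly, but it depends on the unknown output $H$ and so cannot be fed into Proposition~\ref{prop:equidistribution} as input.

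The paper breaks this circularity in two steps. First it reduces to the case $0\in U$ (replacing $\phi$ by $\phi^{H_0}$ for a fixed $H_0\in U$ and translating $U$ via the composition operation $\#$), which allows one to assume $U$ has small $C^2$-diameter and hence that $\|G_i^H - F_i\|_{C^0}$ is uniformly small for all $H\in U$. Then it applies Proposition~\ref{prop:equidistribution} to fixed functions independent of $H$ (the $F_i$, or as written, nearby members of a $C^0$-dense family) and absorbs the discrepancy $\|G_i^H - F_i\|_{C^0}$ by the triangle inequality. Without the reduction to $0\in U$ there is no control on this discrepancy, and your argument does not close as written.
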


We defer the proofs of Propositions \ref{prop:equidistribution} and \ref{prop:equidistribution2} to the end of the section.

\subsection{Proof of Theorem \ref{thm:main}} We prove Theorem \ref{thm:main} assuming Proposition \ref{prop:equidistribution2}. Fix any monotone, nondegenerate $\phi \in \Diff(\Sigma, \omega)$. Let $\{f_i\}_{i = 1}^\infty$ be any countable, $C^0$-dense subset of $C^\infty(\Sigma)$. 

For any $N \geq 1$ and $\epsilon > 0$, write $\cH(\Sigma, N, \epsilon)$ for the set of all $H \in \cH(\Sigma)$ such that there is an orbit set $\cO$ of $\phi^H$ such that
\begin{itemize}
    \item Every simple orbit in $\cO$ is nondegenerate. 
    \item For any $i \in \{1, \ldots, N\}$,
    $$|\frac{1}{|\cO|}\cO(f_i) - \int_{\Sigma} f_i \omega| < \epsilon.$$
\end{itemize}

Proposition \ref{prop:equidistribution2}, followed by an application of Lemma \ref{lem:nonDegenerateAfterSmallPerturbation}, ensures that $\cH(\Sigma, N, \epsilon)$ is dense in $\cH(\Sigma)$. It is also open since every simple orbit in $\cO$ is nondegenerate. Now fix some sequence $\{\epsilon_N\}_{N \geq 1}$ converging to zero. Then the set
$$\cH_{\text{good}} = \cap_N \cH(\Sigma, N, \epsilon_N)$$
is generic in $\cH(\Sigma)$. For any $H \in \cH_{\text{good}}$, $\phi^H$ admits a sequence of orbit sets $\{\cO_N\}_{N \geq 1}$ such that, for each $N \geq 1$, and $i \in \{1, \ldots, N\}$,
$$|\frac{1}{|\cO_N|}\cO_N(f_i) - \int_\Sigma f_i\omega| < \epsilon_N.$$

Fix such an $H$ and denote by $\{\cO_N\}_{N \geq 1}$ the associated sequence of orbit sets of $\phi^H$ as defined above. We conclude the proof of Theorem \ref{thm:main} by showing below that this sequence of orbit sets is equidistributed. 

Pick any $f \in C^\infty(\Sigma)$. Write 
$$\delta_N(f) = \inf_{i \in \{1, \ldots, N\}} \|f - f_i\|_{C^0}.$$

The fact that the set $\{f_i\}_{i = 1}^\infty$ is $C^0$-dense implies that, for any fixed $f \in C^\infty(\Sigma)$, $\lim_{N \to \infty} \delta_N(f) = 0$. We estimate for any $N \geq 1$ the difference of the averages of $f$ over $\cO_N$ and over $\Sigma$:
\begin{equation}
    \label{eq:proof1} 
    \begin{split}
    |\frac{1}{|\cO_N|}\cO_N(f)- \int_{\Sigma} f\omega| &\leq \inf_{i \in \{1, \ldots, N\}} \Big(|\frac{1}{|\cO_N|}\cO_N(f_i) - \int_{\Sigma} f_i\omega| \\
    &\qquad + |\frac{1}{|\cO_N|}\cO_N(f - f_i) - \int_{\Sigma} (f - f_i)\omega|\Big) \\
    &\leq \epsilon_N + \inf_{i \in \{1, \ldots, N\}} \|f - f_i\|_{C^0}(|\frac{1}{|\cO_N|}\cO_N(1)| + 1) \\
    &\leq \epsilon_N + 2\delta_N.
    \end{split}
\end{equation}

The last line plugs in the bound for the infimum over $i$ of $\|f - f_i\|_{C^0}$, along with the fact that $\frac{1}{|\cO_N|}\cO_N(1) = 1$. Taking the limit of (\ref{eq:proof1}) as $N \to \infty$ shows equidistribution of the sequence $\{\cO_N\}_{N \geq 1}$: \begin{equation*} \lim_{N \to \infty} |\frac{1}{|\cO_N|}\cO_N(f)- \int_{\Sigma} f\omega| = 0.\end{equation*} 

\subsection{Proof of Theorem \ref{thm:generic}}  Let $\{f_i\}_{i = 1}^\infty$ be any countable, $C^0$-dense subset of $C^\infty(\Sigma)$. To prove Theorem \ref{thm:generic}, we first observe that monotone diffeomorphisms form a $C^\infty$-dense subset of $\Diff(\Sigma, \omega)$ (see \cite[Proposition $5.6$]{closingLemma21}). Write $\mathcal{D}(N, \epsilon) \subset \Diff(\Sigma, \omega)$ for the set of all area-preserving maps which admit an orbit set $\cO$ such that 
\begin{itemize}
    \item Every simple orbit in $\cO$ is nondegenerate. 
    \item For any $i \in \{1, \ldots, N\}$,
    $$|\frac{1}{|\cO|}\cO(f_i) - \int_{\Sigma} f_i \omega| < \epsilon.$$
\end{itemize}

The fact that monotone diffeomorphisms are $C^\infty$-dense, Proposition \ref{prop:equidistribution2}, and Lemma \ref{lem:nonDegenerateAfterSmallPerturbation} ensure that $\mathcal{D}(N, \epsilon)$ is nonempty and dense in $\Diff(\Sigma, \omega)$. The fact that each simple orbit is nondegenerate implies that $\mathcal{D}(N, \epsilon)$ is open as well. Take any sequence $\epsilon_N \to 0$ and write
$$\mathcal{D}_{\text{good}} = \bigcap_N \mathcal{D}(N, \epsilon_N).$$

Then $\mathcal{D}_{\text{good}}$ is $C^\infty$-generic in $\Diff(\Sigma, \omega)$. The argument in the proof of Theorem \ref{thm:main} above shows that each map in $\mathcal{D}_{\text{good}}$ admits an equidistributed sequence of orbit sets, which concludes the proof of Theorem \ref{thm:generic}. 

\subsection{Proof of Proposition \ref{prop:equidistribution}}

We apply the properties of PFH spectral invariants from \S\ref{subsec:spectralInvariants} and the results listed in \S\ref{sec:lemmas} to prove Proposition \ref{prop:equidistribution}. Fix a monotone, nondegenerate $\phi \in \Diff(\Sigma, \omega)$, an integer $N \geq 1$, a parameter $\epsilon > 0$, a nonempty open $U \subset \cH(\Sigma)$, and a subset $\{F_i\}_{i = 1}^N$ of $\cH(\Sigma)$ as in the statement of the proposition. 

\subsubsection{PFH errors are Lipschitz continuous} The fact that $\phi$ is monotone and the nonvanishing result in Theorem \ref{thm:nonVanishing} produces a sequence $\{\Gamma_m\}_{m \geq 1}$ of negative monotone classes with respective degrees $\{d_m\}_{m \geq 1}$ monotonically increasing, a sequence of reference cycles $\{\Theta_m\}_{m \geq 1}$, and a sequence of nonzero classes
$$\sigma_m \in \TWPFH_*(\phi, \Gamma_m, \Theta_m).$$

Fix any $m$ and write $\bfS_m = (\phi, \Gamma_m, \Theta_m)$. We define a real-valued ``error function'' $e_m$ on $\cH(\Sigma)$ by the formula
$$e_m(H) = \int_{M_\phi} H \omega_\phi \wedge dt - \frac{c_{\sigma_m}(H; \bfS_m) - c_{\sigma_m}(0; \bfS_m) + \int_{\Theta_m} H dt}{d_m}.$$

The Weyl law (Theorem \ref{thm:weylLaw}) implies the following crucial identity for any fixed $H \in \cH(\Sigma)$:
$$\lim_{m \to \infty} e_m(H) = 0.$$

We use properties of PFH spectral invariants to show that the error functions are uniformly Lipschitz with respect to the $C^0$ distance on $\cH(\Sigma)$.

\begin{lem}[Uniform Lipschitz continuity] \label{lem:lipschitzErrors} 
For any pair $H$ and $K$ in $\cH(\Sigma)$ and any $m \geq 1$, 
$$|e_m(H) - e_m(K)| \leq 2\|H - K\|_{C^0}.$$
\end{lem}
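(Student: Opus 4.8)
The plan is to expand $e_m(H) - e_m(K)$ and control each piece using the Hofer--Lipschitz continuity of PFH spectral invariants (Proposition \ref{prop:hoferContinuity}). Write $g = H - K$. From the definition of $e_m$, the $\int_{M_\phi}(\cdot)\,\omega_\phi \wedge dt$ terms contribute $\int_{M_\phi} g\,\omega_\phi \wedge dt$, and since $\omega_\phi$ has total area $1$ on each fiber this equals $\int_{\bR/\bZ}\left(\int_\Sigma g_t\,\omega\right)dt$, which in absolute value is at most $\int_{\bR/\bZ}\max_\Sigma |g_t|\,dt \le \|g\|_{C^0}$. The $\int_{\Theta_m}(\cdot)\,dt$ terms contribute $-\tfrac{1}{d_m}\int_{\Theta_m} g\,dt$. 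The remaining terms give $-\tfrac{1}{d_m}\bigl[(c_{\sigma_m}(H;\bfS_m) - c_{\sigma_m}(K;\bfS_m)) - (c_{\sigma_m}(0;\bfS_m) - c_{\sigma_m}(0;\bfS_m))\bigr]$; the second bracket vanishes, so this is just $-\tfrac{1}{d_m}(c_{\sigma_m}(H;\bfS_m) - c_{\sigma_m}(K;\bfS_m))$.

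So I would group the spectral-invariant difference together with the reference-cycle term:
\begin{equation*}
e_m(H) - e_m(K) = \int_{M_\phi} g\,\omega_\phi \wedge dt \;-\; \frac{1}{d_m}\left( c_{\sigma_m}(H;\bfS_m) - c_{\sigma_m}(K;\bfS_m) + \int_{\Theta_m} g\,dt \right).
\end{equation*}
Now Proposition \ref{prop:hoferContinuity}, applied with this $H$ and $K$ (using that $\Gamma_m$ has degree $d_m$), bounds the quantity inside the parentheses: it lies between $d_m\int_{\bR/\bZ}\min_\Sigma g_t\,dt$ and $d_m\int_{\bR/\bZ}\max_\Sigma g_t\,dt$. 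Dividing by $d_m$, the bracketed term $\tfrac{1}{d_m}(\cdots)$ lies in $[\int \min_\Sigma g_t\,dt, \int \max_\Sigma g_t\,dt]$, an interval of endpoints each of absolute value $\le \|g\|_{C^0}$. Combining with the bound $|\int_{M_\phi} g\,\omega_\phi\wedge dt| \le \|g\|_{C^0}$ via the triangle inequality gives $|e_m(H) - e_m(K)| \le 2\|g\|_{C^0} = 2\|H-K\|_{C^0}$, uniformly in $m$.

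One point of care: Proposition \ref{prop:hoferContinuity} as stated requires $\phi^H$ and $\phi^K$ to be nondegenerate, whereas the lemma is asserted for all $H, K \in \cH(\Sigma)$. This is handled by the extension of $c_\sigma(H;\bfS)$ to degenerate $\phi^H$ described right after Proposition \ref{prop:hoferContinuity} (as a $C^2$-limit of nondegenerate values), for which the same inequality persists; so one simply invokes the extended version, or equivalently approximates $H$ and $K$ by nondegenerate Hamiltonians, applies the inequality, and passes to the limit. I expect this bookkeeping to be the only mild obstacle — the estimate itself is a direct consequence of Proposition \ref{prop:hoferContinuity} once the terms are grouped correctly, and the key insight is precisely that the $\int_{\Theta_m} H\,dt$ correction term in the Weyl-law numerator is exactly the term that makes the spectral difference Hofer-continuous with a constant independent of $d_m$.
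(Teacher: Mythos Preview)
Your proposal is correct and follows essentially the same approach as the paper: group the spectral-invariant difference with the reference-cycle term so that Proposition~\ref{prop:hoferContinuity} bounds it by $\|H-K\|_{C^0}$ after dividing by $d_m$, bound the $\int_{M_\phi}(H-K)\,\omega_\phi\wedge dt$ term by $\|H-K\|_{C^0}$ using that $\omega$ has total area $1$, and add. Your remark on extending to degenerate $\phi^H$, $\phi^K$ is a useful clarification that the paper leaves implicit.
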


\begin{proof}
Proposition \ref{prop:hoferContinuity} and the fact that $\omega$ has integral $1$ shows directly that
$$|\frac{c_{\sigma_m}(H; \bfS_m) - c_{\sigma_m}(K; \bfS_m) + \int_{\Theta_m} (H - K)dt}{d_m}| \leq \|H - K\|_{C^0}.$$

The fact that $\omega$ has integral $1$ also shows that 
$$|\int_{M_\phi} (H - K)\omega_\phi \wedge dt| \leq \|H - K\|_{C^0}.$$

Add these two inequalities to deduce the lemma. 
\end{proof}

\subsubsection{Application of Lemma \ref{lem:parametric}} We are free to assume that $U$ has finite $C^2$-diameter. Fix positive constants $c_1$, $c_2$, and $c_3$. The constant $c_1$ will be large, while the constants $c_2$ and $c_3$ will be small. Fix any $K \in U$ and define $F \in \cH^N(\Sigma)$ by setting
$$F^\tau = K + \sum_{i=1}^N \tau_i F_i/c_1.$$

We choose $c_1$ sufficiently large so that $F^\tau \in U$ for every $\tau \in [0,1]^N$. Choose $c_2$ and $c_3$ so that $c_1c_3 + 2c_2 < \epsilon$. By Lemma \ref{lem:parametric}, there is a sequence of elements $\{H_k\}_{k \geq 1}$ in $C^\infty([0,1]^N \times \bR/\bZ \times \Sigma)$ such that the following holds:
\begin{itemize}
    \item The sequence $\{H_k\}_{k \geq 1}$ converges to $F$ in the $C^\infty$ topology.
    \item For every $k \geq 1$, 
    $$\text{measure}(\{\tau \in [0,1]^N\,|\,\phi^{H_k^\tau}\text{ is nondegenerate}\}) = 1.$$
\end{itemize}

We must transform the $H_k$ to elements in $\cH^N(\Sigma)$ which retain the same properties as above. This will be done by the following reparameterization trick. Since $K$ and each function $F_i$ lies in $\cH(\Sigma)$, there exists some $\delta > 0$ such that $F^\tau(t, -) \equiv 0$ for every $i \in \{1, \ldots, N\}$, any $t \in [0, 2\delta) \cup (1 - 2\delta, 1] \subset \bR/\bZ$, and $\tau \in [0,1]^N$. Define a smooth function $\beta: [0,1] \to [0,1]$ satisfying the following properties:
\begin{itemize}
    \item $\beta(t) = 0$ near $t = 0$ and $\beta(t) = 1$ near $t = 1$. 
    \item $\beta'(t) \geq 0$ everywhere.
    \item $\beta(t) = t$ for any $t \in (\delta, 1 - \delta)$.  
\end{itemize}

Define a continuous linear map
$$T_\beta: C^\infty([0,1]^N \times \bR/\bZ \times \Sigma) \to \cH^N(\Sigma)$$
by setting $T_\beta(H)$ to be the map
$$(\tau, t, p) \mapsto \beta'(t)H(\tau, \beta(t), p).$$

As a consequence of the properties of $\beta$, we find $T_\beta(F) = F$. If we fix $k \gg 1$ and set $H = T_\beta(H_k) \in \cH^N(\Sigma)$, we can ensure that $H$ has the following properties:
\begin{itemize}
    \item $H$ lies in the open set $U$. 
    \item $\text{measure}(\{\tau \in [0,1]^N\,|\,\phi^{H^\tau}\text{ is nondegenerate}\}) = 1.$
    \item For every $i \in \{1, \ldots, N\}$, we have the bound $\|c_1\partial_iH^\tau - F_i\|_{C^0} < c_2$. 
\end{itemize}

The first property is a consequence of the fact that $H_k \to F$ in the $C^\infty$ topology, which implies by continuity of $T_\beta$ that $T_\beta(H_k) \to T_\beta(F) = F$. The second property is a consequence of the fact that, for any $\tau \in [0,1]$ and $K \in C^\infty([0,1]^N \times \bR/\bZ \times \Sigma)$, the time-one map of $(T_\beta(K))^\tau$ is the same as the time-one map of $K^\tau$. The third property is a consequence of the fact that $H_k \to F$ in the $C^\infty$ topology and the fact that the operator $T_\beta$ commutes with differentiation in the $\tau$-variable. We know that $c_1\partial_i H_k^\tau \to F_i$ in the $C^0$ topology, which in turn implies that $$c_1\partial_{i} T_\beta(H_k^\tau) = c_1 T_\beta(\partial_i H_k^\tau) \to T_\beta(F_i) = F_i$$
in the $C^0$ topology. 

\subsubsection{Application of Lemmas \ref{lem:derivatives} and \ref{lem:MNS}} Recall the positive constants $c_1$, $c_2$, and $c_3$ and family $H \in \cH^N(\Sigma)$ fixed above. 

Define a sequence of uniformly Lipschitz functions $\overline{e}_m: [0,1]^N \to \mathbb{R}$ by the formula
$$\overline{e}_m(\tau) = e_m(H^\tau).$$

Since they are uniformly Lipschitz functions on a compact space and pointwise converge to zero, the functions $\overline{e}_m$ uniformly converge to zero. Fix $m \gg 1$ so that $\|\overline{e}_m(\tau)\|_{C^0} < \epsilon(c_3, N)$ and apply Lemma \ref{lem:MNS} with the constant $\delta = c_3$ and the full measure set $E$ being the set on which $\phi^{H^\tau}$ is nondegenerate. This produces sequences $\{\tau^{j,k}\}_{k \geq 1}$ for $j \in \{1, \ldots, N+1\}$ in $(0,1)^N$ such that the following conditions hold:
\begin{itemize}
    \item The map $\phi^{\tau^{j,k}}$ is nondegenerate for any $j$ and $k$.
    \item There exists $\tau^\infty \in (0, 1)^N$ such that 
    $$\lim_{k \to \infty} \tau^{j,k} = \tau^\infty$$
    for any $j = 1, \ldots, N+1$. 
    \item $\overline{e}_m$ is differentiable at $\tau^{j,k}$ for any $j$ and any $k$.
    \item For any $j = 1, \ldots, N+1$, the limit
    $$v^j = \lim_{k \to \infty} (\nabla \overline{e}_m)(\tau^{j,k}) \in \mathbb{R}^N$$
    exists.
    \item There is a point $v$ in the convex hull of $\{v^1, \ldots, v^{N+1}\}$ of distance at most $c_3$ from the origin. 
\end{itemize}

The last bullet yields a set of positive real numbers $\{a_j\}_{j = 1}^{N+1}$ such that $\sum_{j=1}^N a_j = 1$ and
$$\|\sum_{j=1}^N a_j v^j\| < c_3.$$

It follows that for all sufficiently large $k$, 
\begin{equation} \label{eq:proof2} \|\sum_{j=1}^N a_j (\nabla \overline{e}_m)(\tau^{j,k})\| < c_3. \end{equation}

Lemma \ref{lem:derivatives} and the first and third bullets above show that for any $j$ and $k$, there is an integral orbit set $\cO_{j,k}$ with $|\cO_{j,k}| = d_m$ and a class $W_{j,k}$ such that
$$c_{\sigma_m}(H^{\tau^{j,k}}; \bfS_m) = \bfA(\cO_{j,k}, W_{j,k})$$
and
\begin{equation} \label{eq:proof3} \partial_i c_{\sigma_m}(H^{\tau}; \bfS_m)(\tau^{j,k}) = (\int_{\Theta_{\cO_{j,k}}} - \int_{\Theta_{\text{ref}}})( (\partial_i H^\tau)(\tau^{j,k}) dt).\end{equation}

Equation (\ref{eq:proof3}) and the fact that the $i$-th derivative of $H^\tau$ is close to $F_i/c_1$ yields the following bound on the $i$-th derivative of $\overline{e}_m$ at any $\tau^{j,k}$:
\begin{equation} \label{eq:proof4} 
\begin{split}
(\partial_i \overline{e}_m)(\tau^{j,k}) &= \big(\int_{M_\phi} (\partial_i H^\tau)(\tau^{j,k}) \omega_\phi \wedge dt -  \frac{1}{d_m}\int_{\Theta_{\cO_{j,k}}}(\partial_i H^\tau)(\tau^{j,k}) dt\big) \\
&\geq c_1^{-1}\big(\int_{M_\phi} F_i\omega_\phi \wedge dt -  \frac{1}{d_m}\int_{\Theta_{\cO_{j,k}}} F_i dt\big) - 2c_1^{-1}c_2 \\
\end{split}
\end{equation}

The second line splits up $\partial_i H^\tau = F_i + (\partial_i H^\tau - F_i)$ and uses the bound $\|c_1\partial_i H^\tau - F_i\|_{C^0} < c_2$. 

Write $\cO_k = \sum_{j=1}^N a_j \cO_{j,k}$, where the $a_j$ are as in (\ref{eq:proof2}). Plug in (\ref{eq:proof4}) into (\ref{eq:proof2}) to deduce the following inequality for any $i \in \{1, \ldots, N\}$ and any sufficiently large $k$:

\begin{equation}
    \label{eq:proof5} |\frac{1}{|\cO_k|}\int_{\Theta_{\cO_k}} F_i dt - \int_{M_\phi} F_i \omega_\phi \wedge dt| \leq c_1c_3 + 2c_2. 
\end{equation}

Observe that $|\cO_k| = d_m$ for every $k$. A compactness argument implies that a subsequence of the $\cO_k$ will converge to an orbit set $\cO$ for $\phi^{\tau^\infty}$ with $|\cO| = d_m$ satisfying
\begin{equation}
    \label{eq:proof6} 
    |\frac{1}{|\cO|}\int_{\Theta_{\cO}} F_i dt - \int_{M_\phi} F_i \omega_\phi \wedge dt| \leq c_1c_3 + 2c_2 < \epsilon
\end{equation}
for any $i \in \{1, \ldots, N\}$. The second inequality in (\ref{eq:proof6}) follows from our a priori choice of constants $c_1$, $c_2$, $c_3$. This concludes the proof of the proposition.

\subsection{Proof of Proposition \ref{prop:equidistribution2}}

We prove Proposition \ref{prop:equidistribution2} using Proposition \ref{prop:equidistribution}. Fix a monotone, nondegenerate $\phi \in \Diff(\Sigma, \omega)$. Fix $N$, $\epsilon$, and $U$ as in the statement of the proposition. 

\subsubsection{Reduction to the case $0 \in U$} It is sufficient to consider the case $0 \in U$ to prove the proposition. Recall the composition operation for Hamiltonians in $\cH(\Sigma)$:
$$(K_1 \# K_2)(t, x) = K_1(t, x) + K_2(t, (\phi^t_{K_1})^{-1}(x)).$$

The Hamiltonian $K_1 \# K_2$ satisfies $\phi^t_{K_1 \# K_2} = \phi^t_{K_1} \circ \phi^t_{K_2}$ for each $t$. For any fixed Hamiltonian $K \in \cH(\Sigma)$, write $\bar{K}(t,x) = -K(t, \phi^t_K(x))$ for its inverse with respect to the operation $\#$. 

Fix any $H \in U$. Then write 
$$U^H = \{\bar{H}\#K \,|\,K \in U\} \subset \cH(\Sigma),$$
which by definition is an open set containing $\bar{H}\#H = 0$. Then Proposition \ref{prop:equidistribution2} is equivalent to the version where we replace the base map $\phi$ by $\phi^H$ and the open set $U$ by $U^H$.

\subsubsection{Application of Proposition \ref{prop:equidistribution}} 
By the previous step, we are free to assume $0 \in U$. Let $\{f_i\}_{i = 1}^{N}$ be the finite set from the statement of the proposition. Fix a non-negative smooth function $\chi: [0,1] \to [0, \infty)$ which is compactly supported in $(0,1)$ and integrates to $1$ over the real line. Define for any $i$ the function
$$G_i(t,x) = \chi(t)f_i(x) \in \cH(\Sigma).$$

Write for any $H \in U$ the function
$$G_i^H(t, x) = \chi(t)f_i( (\phi_H^t)^{-1}(x)) \in \cH(\Sigma).$$

Since $U$ contains $0$, we can assume without loss of generality that it has sufficiently $C^2$-small diameter so that for any $i$ and any $H \in U$,
\begin{equation} \label{eq:prop6} \|G_i - G_i^H\|_{C^0} < \epsilon/6. \end{equation}

Fix any $C^0$-dense set $\{F_j\}_{j=1}^\infty \subset \cH(\Sigma)$. Fix some $N_* \gg 1$ so that for any $i \in \{1, \ldots, N\}$, there is some $j(i) \in \{1, \ldots, N_*\}$ such that
\begin{equation} \label{eq:prop1} \|G_i - F_{j(i)}\|_{C^0} < \epsilon/6. \end{equation}

Proposition \ref{prop:equidistribution} produces $H \in U$ and an orbit set $\cO \in \cP_{\bR}(\phi^H)$ such that for any $j \in \{1, \ldots, N_*\}$, 
\begin{equation} \label{eq:prop2} |\frac{1}{|\cO|}\int_{\Theta_{\cO}} F_j dt - \int_{M_\phi} F_j \omega_\phi \wedge dt| < \epsilon/3. \end{equation}

The functions $G_i^H$ recover data about the functions $f_i$ in the following way. By definition, the one-form $G_i^H dt$ on $M_\phi$ is the pullback $(M_H)^*(G_i dt)$ and the three-form $G_i^H \omega_\phi \wedge dt$ is the pullback $(M_H)^*(G_i \omega_{\phi^H} \wedge dt)$. For any orbit set $\cO$ for $\phi^H$, we then compute
\begin{equation}
    \label{eq:prop3}
    \begin{split}
    \frac{1}{|\cO|}\int_{(M_H)^{-1}(\Theta_{\cO})} G_i^H dt &= \frac{1}{|\cO|}\int_{\Theta_{\cO}} G_i dt \\
    &= \frac{1}{|\cO|} \cO(f_i).
    \end{split}
\end{equation}

The last line follows from lifting to $[0,1] \times \Sigma$. Expand 
$$\cO = \sum_{k=1}^n a_k \cdot S_k = \sum_{k = 1}^n a_k \cdot \{x_{k,1}, \ldots, x_{k, d_k}\}.$$ 

Each orbit $\Theta_{S_k} \subset M_{\phi^H}$ lifts to a union of straight line segments
$$\bigsqcup_{i=1}^{d_k}\,[0,1] \times \{x_{k,i}\} \subset [0,1] \times \Sigma.$$ 

The integral of $G_i dt$ over $[0,1] \times \{x_{k,i}\}$ is the integral of $f_i(x_{k,i})\chi(t) dt$ over this line, which is equal to $f_i(x_{k,i})$ since $\chi(t)$ integrates to $1$. Therefore, the integral of $G_i^H dt$ over $\Theta_{S_k}$ is $S_k(f_i)$; summing up over all $k$ yields the desired result. We also compute
\begin{equation} \label{eq:prop4}
\begin{split}
    \int_{M_\phi} G_i^H \omega_\phi \wedge dt &= \int_{M_{\phi^H}} G_i \omega_{\phi^H} \wedge dt \\
    &= \int_{[0,1] \times \Sigma} \chi(t)f_i(x)\omega \wedge dt \\
    &= \big(\int_{\Sigma} f_i\omega\big) \cdot \big(\int_{0}^1 \chi(t) dt \big) \\
    &= \int_{\Sigma} f_i\omega. 
\end{split}
\end{equation}

We are now sufficiently prepared to conclude the proposition by proving the following bound for any $i \in \{1, \ldots, N\}$:
\begin{equation}
    \label{eq:prop5} 
    \begin{split}
    |\frac{1}{|\cO|}\cO(f_i) - \int_{\Sigma} f_i \omega| &= |\frac{1}{|\cO|}\int_{\Theta_{\cO}} G_i^H dt - \int_{M_\phi} G_i^H \omega \wedge dt| \\
    &\leq |\frac{1}{|\cO|}\int_{\Theta_{\cO}} F_{j(i)} dt - \int_{M_\phi} F_{j(i)} \omega \wedge dt| \\
    &\qquad + \frac{1}{|\cO|}\int_{\Theta_{\cO}} |F_{j(i)} - G_i^H|  dt + \int_{M_\phi} |F_{j(i)} - G_i^H| \omega \wedge dt \\
    &< \epsilon/3 + \frac{1}{|\cO|}\int_{\Theta_{\cO}} |F_{j(i)} - G_i^H|  dt + \int_{M_\phi} |F_{j(i)} - G_i^H| \omega \wedge dt \\
    &< \epsilon/3 + 2\epsilon/3 \\
    &= \epsilon. 
    \end{split}
\end{equation}

The first line uses the computations (\ref{eq:prop3}) and (\ref{eq:prop4}). The third line uses (\ref{eq:prop2}). The fourth line combines (\ref{eq:prop6}) and (\ref{eq:prop1}).

\section{Proofs of supporting lemmas}\label{sec:supportingProofs} In this section, we give proofs of the supporting lemmas from \S\ref{sec:lemmas}.

\subsection{Proof of Lemma \ref{lem:parametric}} Fix a nondegenerate, monotone $\phi \in \Diff(\Sigma, \omega)$ and $N \geq 1$ as in the statement of the lemma. For any $l \geq 0$, write $C^l([0,1]^N \times \bR/\bZ \times \Sigma)$ for the Banach space of functions of class $C^l$ on $[0,1]^N \times \bR/\bZ \times \Sigma$. The proof of Lemma \ref{lem:parametric} closely follows the proof of the analogous result in \cite[\S$4$]{Irie21}. 

\subsubsection{Statement of necessary lemmas} Lemma \ref{lem:parametric} will be a consequence of the following three lemmas. 

\begin{lem}\label{lem:parametric1}
For any $l \geq 3$, there is a generic set of $H \in C^l([0,1]^N \times \bR/\bZ \times \Sigma)$ such that 
$$\text{measure}(\{\tau \in [0,1]^N\,|\,\phi^\tau\text{ is nondegenerate}\}) = 1.$$
\end{lem}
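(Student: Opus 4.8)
\textbf{Proof plan for Lemma \ref{lem:parametric1}.}

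The strategy is the standard Sard--Smale parametric transversality argument, following Irie's treatment closely. First I would set up, for each integer $d \geq 1$, a ``universal'' space of periodic points of period $d$. Consider the Banach manifold $\mathcal{B}^l = C^l([0,1]^N \times \bR/\bZ \times \Sigma)$ and the evaluation-type map that records, for a parameter $\tau$ and a point $x \in \Sigma$, the value $(\phi^\tau)^d(x) - x$ in a suitable sense; more precisely, work on the mapping torus or directly with the time-one maps and consider the section of an appropriate bundle whose zero set cuts out the period-$d$ orbits. The first key step is to verify that this section is transverse to the zero section when one allows $\tau$ and the Hamiltonian $H$ to vary: given a periodic point, one must produce a Hamiltonian perturbation (supported away from $t = 0, 1$ so as to stay in the relevant space) that moves the return map in an arbitrary prescribed direction, which is a local computation using that Hamiltonian vector fields span enough directions. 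This is the step I expect to be the main obstacle — one must be careful that the perturbation does not also destroy transversality elsewhere, and that the $C^l$ regularity (hence $l \geq 3$, so that the return map and its derivative are $C^1$ in the relevant variables) is enough to invoke Sard--Smale; degenerate periodic points where several points of the orbit collide also need separate bookkeeping, typically handled by stratifying according to the ``simple period''.

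Granting transversality of the universal section, its zero set $\mathcal{Z}_d$ is a Banach submanifold, and the projection $\pi_d : \mathcal{Z}_d \to \mathcal{B}^l$ is a Fredholm map of index equal to $N$ (the dimension of the parameter cube), since the fiber directions $\Sigma$ of dimension $2$ are cut out by the $2$-dimensional equation $(\phi^\tau)^d(x) = x$. By the Sard--Smale theorem, the set of regular values of $\pi_d$ is residual in $\mathcal{B}^l$. For a regular value $H$, the fiber $\pi_d^{-1}(H)$ is a smooth $N$-manifold, and a periodic point of $\phi^\tau$ of period $d$ is nondegenerate precisely when the corresponding point of $\mathcal{Z}_d$ is a regular point of $\pi_d$; hence for regular-value $H$, every period-$d$ periodic orbit of every $\phi^\tau$ is nondegenerate. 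Actually the cleaner route, which I would follow, is to show that for regular-value $H$ the set of $\tau$ admitting a degenerate period-$d$ orbit is contained in the image under $\pi_d$ of the critical locus, which by Sard (the finite-dimensional version applied fiberwise, after the Sard--Smale reduction) has measure zero in $[0,1]^N$.

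Finally I would assemble the pieces: for each $d$, let $\mathcal{G}_d \subset \mathcal{B}^l$ be the residual set of $H$ for which $\{\tau : \phi^\tau \text{ has a degenerate orbit of period } \leq d\}$ has measure zero; intersect over all $d$ to get a residual set $\mathcal{G} = \cap_d \mathcal{G}_d$, which is still residual since a countable intersection of residual sets is residual. For $H \in \mathcal{G}$, the set of $\tau$ for which $\phi^\tau$ fails to be nondegenerate is a countable union (over $d$) of measure-zero sets, hence measure zero, so $\text{measure}(\{\tau : \phi^\tau \text{ is nondegenerate}\}) = 1$. The only subtlety to flag is the interplay between working in $C^l$ for finite $l \geq 3$ (needed for Sard--Smale, where the Fredholm map must be $C^k$ with $k > \max(0, \text{index})$, so $l$ may need to be taken larger than $3$ depending on $N$, but the later Lemma \ref{lem:parametric} will bootstrap from $C^l$ to $C^\infty$) versus the $C^\infty$ statement, which is presumably the content of the remaining two lemmas referenced in \S\ref{sec:supportingProofs}.
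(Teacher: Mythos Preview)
Your Sard--Smale strategy is the same one the paper uses, and the skeleton (universal moduli space of orbits, Fredholm projection of index $N$ to the space of Hamiltonians, Sard--Smale, then finite-dimensional Sard on the $N$-dimensional fiber) is exactly the paper's argument via Lemmas \ref{lem:parametric2} and \ref{lem:parametric3}. There is, however, a real gap at the point you yourself flag as the main obstacle.

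If you take $\mathcal{Z}_d$ to be the zero set of $(H,\tau,x) \mapsto (\phi^\tau)^d(x) - x$, transversality of the universal section \emph{fails} on the non-simple locus: at a fixed point $x$ of $\phi^\tau$, the $H$-variation of $(\phi^\tau)^d(x)$ lies in the image of $I + A + \cdots + A^{d-1}$ with $A = D\phi^\tau(x)$, and if $A$ has a primitive $d$-th root of unity as eigenvalue this operator (and likewise $A^d - I$) is singular, so no variation hits the missing direction. Your proposed fix, stratifying by simple period, restores transversality on each stratum --- this is the content of Lemma \ref{lem:parametric2} --- but then Sard--Smale plus Sard only yields that for generic $H$ and full-measure $\tau$ each simple period-$d$ orbit has return map without eigenvalue $1$. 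It says nothing about whether that return map has some other root of unity as eigenvalue, and that is exactly what nondegeneracy of $\phi^\tau$ requires: a simple period-$d'$ orbit whose return map has a primitive $k$-th root of unity is a degenerate period-$kd'$ orbit, and it is invisible to the simple-orbit moduli space at level $kd'$. The paper closes this gap with a second, separate transversality statement, Lemma \ref{lem:parametric3}: inside the simple-orbit space $\mathcal{M}^{l,d}$ the locus $\mathcal{M}^{l,d}_{\text{bad}}$ of orbits whose return map has a root-of-unity eigenvalue is a countable union of codimension-$\geq 1$ submanifolds, proved by showing that a Hamiltonian perturbation supported near a single orbit point can push the \emph{linearized} return map in an arbitrary direction in $\mathfrak{sp}(T_{x_1}\Sigma)$. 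Your outline does not contain this second step, and without it the conclusion that $\phi^\tau$ is nondegenerate does not follow.
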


For any $l \geq 3$, define $\mathcal{M}^l$ to be the space of tuples $(H, \tau, S)$ where $H \in C^l([0,1]^N \times \bR/\bZ \times \Sigma)$, $\tau \in [0,1]^N$, and $S$ is a simple periodic orbit of $\phi^\tau$. The space of simple periodic orbits is topologized as a subset of the disjoint union $\bigsqcup_{d \geq 1} \Sigma^d$ of products of $\Sigma$; this gives $\mathcal{M}^l$ a natural topology. For any $d \geq 1$, we denote by $\mathcal{M}^{l,d}$ the connected component of $\mathcal{M}^l$ consisting of tuples $(H, \tau, S)$ with $|S| = d$. 

\begin{lem} \label{lem:parametric2}
For any $l \geq 3$ and $d \geq 1$, the space $\mathcal{M}^{l,d}$ has the structure of a Banach manifold of class $C^{l - 1}$ such that the projection
$$\mathcal{M}^{l,d} \to C^l([0,1]^N \times \bR/\bZ \times \Sigma)$$
is a $C^{l-1}$ Fredholm map of index $N$. 
\end{lem}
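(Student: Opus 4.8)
The plan is to realize $\mathcal{M}^{l,d}$ as the zero set of a section of a Banach vector bundle and then apply the standard infinite-dimensional implicit function theorem, exactly as in \cite[\S4]{Irie21}. First I would fix $d \geq 1$ and set up the relevant Banach manifolds. Let $\mathcal{B} = C^l([0,1]^N \times \bR/\bZ \times \Sigma)$, and let $P$ be the (finite-dimensional) manifold of ordered $d$-tuples of pairwise distinct points of $\Sigma$, i.e.\ the configuration space $\mathrm{Conf}_d(\Sigma)$; this is an open subset of $\Sigma^d$. Consider the product Banach manifold $\mathcal{B} \times [0,1]^N \times P$, with a point written $(H, \tau, (x_1, \ldots, x_d))$. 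Over this base there is a natural vector bundle $\mathcal{E}$ whose fiber at such a point is $\bigoplus_{i=1}^d T_{x_{i+1}}\Sigma$ (indices mod $d$), and a section
$$s(H, \tau, (x_1, \ldots, x_d)) = \big(\phi^\tau(x_1) - x_2, \ldots, \phi^\tau(x_d) - x_1\big),$$
interpreted via local charts near each $x_{i+1}$ (or, more invariantly, as the "graph difference" of $\phi^\tau$ against the shift map, which is a genuine section of $\mathcal{E}$). By definition $s^{-1}(0)$ is precisely the set of tuples $(H, \tau, S)$ with $S$ a simple periodic orbit of $\phi^\tau$ of period exactly $d$ — up to the cyclic relabeling ambiguity, which only affects things by a finite covering and does not change the manifold or Fredholm statements. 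So the first task is to show $s$ is transverse to the zero section; then $\mathcal{M}^{l,d} = s^{-1}(0)$ is a $C^{l-1}$ Banach submanifold (the loss of one derivative is because $\phi^\tau$ depends on $H$ only to order $C^{l-1}$ in the $C^{l-1}$-sense once we differentiate the flow once; the section $s$ is $C^{l-1}$).

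The key computation is the surjectivity of the vertical differential of $s$ at a zero $(H, \tau, S)$. The differential in the $P$-directions alone is $D\phi^\tau(x_i) - (\text{shift})$, whose cokernel is governed by whether the Poincaré return map $D(\phi^\tau)^d(x_1)$ has $1$ as an eigenvalue — this need not vanish, so we must use the $H$-directions. The point is that perturbing $H$ produces, through the variation-of-flow formula, an essentially arbitrary perturbation of $\phi^\tau$ supported near any one point $x_i$ of the orbit (here one uses that the $x_i$ are distinct, so the perturbation can be localized at a single point of the orbit without affecting the others, and that $H$ ranges over all of $C^l(\bR/\bZ \times \Sigma)$ on the slice $\{\tau\}$): differentiating $\phi^\tau = \phi \circ \phi^1_{H^\tau}$ in $H$ shows $\partial_H \phi^\tau(x_i)$ can be made to equal any prescribed tangent vector at $\phi^\tau(x_i) = x_{i+1}$. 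This kills the cokernel coming from the $P$-directions entirely, giving surjectivity, hence transversality. The kernel of $Ds$ at a zero, once restricted to a complement of the (surjective onto everything) $H$-directions — or more cleanly, the index — is computed as follows: the linearized operator, as a map out of $T_H\mathcal{B} \times \bR^N \times \bigoplus T_{x_i}\Sigma$ onto $\bigoplus T_{x_{i+1}}\Sigma$, is Fredholm because it differs from the surjective, finite-corank map on the $P$-factor by the finite-rank contributions of the $\bR^N$ and (effectively finite-dimensional, modulo the infinite-dimensional but split-surjective) $H$-directions; a clean way is to note $s$ is, after projecting away the split surjective part, a map between finite-dimensional spaces, and the index bookkeeping gives $\dim[0,1]^N + \dim P - \mathrm{rank}(\mathcal{E}) + (\text{correction}) = N + 2d - 2d = N$. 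I would phrase the index count as: at a zero, $\ker Ds / (\text{contribution forced to be }N\text{-dimensional}) $, or simply cite that the projection $\mathcal{M}^{l,d} \to \mathcal{B}$ has index $\dim[0,1]^N = N$ because fixing $H$ generically cuts $\mathcal{M}^{l,d}$ down to the $N$-dimensional parameter space $[0,1]^N$ (each nondegenerate $\phi^\tau$ contributing isolated orbits). Finally, the projection $\mathcal{M}^{l,d} \to \mathcal{B}$ is $C^{l-1}$ Fredholm of index $N$ because its linearization is, fiberwise, exactly the cokernel/kernel data just analyzed: the full linearized equation $Ds = 0$ read as "solve for $(\tau\text{-variation}, x_i\text{-variations})$ given an $H$-variation" is, modulo the automatically-surjective directions, a square-ish finite-dimensional system twisted by the $N$-dimensional $\tau$-slack, giving index $N$.

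The main obstacle will be making the transversality argument clean and honest, specifically the claim that varying $H$ lets one localize a perturbation of $\phi^\tau$ at a single orbit point: one must write the first variation of the time-one flow map $\phi^1_{H^\tau}$ with respect to $H$ (a Duhamel-type integral of $X_{\dot H}$ along the flow) and check that choosing $\dot H = \partial_s H$ supported in a small spacetime neighborhood of $(t_0, x_i)$ for suitable $t_0$ yields a nonzero — and freely prescribable — displacement of $x_i$ while leaving $x_j$ ($j \neq i$) fixed, using that the forward orbit segments through distinct $x_j$ stay disjoint for small times. The other bookkeeping point requiring care is the derivative loss: $\phi^\tau$ depends $C^{l-1}$ on $(H,\tau)$, the evaluation/difference section $s$ is therefore $C^{l-1}$, so the implicit function theorem yields a manifold and Fredholm map of class $C^{l-1}$ — matching the statement. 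I would handle the cyclic relabeling by working with based orbits (a choice of $x_1$) throughout and remarking that this is a finite cover of the space in the lemma, which changes neither the manifold structure nor the Fredholm index. The rest — that $\mathcal{M}^{l,d}$ is a Banach manifold, that the projection is Fredholm, that its index is $N$ — then follows from the standard machinery (Smale--Sard and the implicit function theorem in Banach spaces) in the same way as \cite[\S4]{Irie21}, so I would keep those parts brief and refer there.
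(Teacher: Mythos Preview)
Your proposal is correct and takes essentially the same approach as the paper: realize $\mathcal{M}^{l,d}$ as the transverse preimage of a codimension-$2d$ submanifold (the paper uses the diagonal in $(\Sigma^d)^2$ under the map $(H,\tau,\{x_i\}) \mapsto (\{x_i\}, \{\phi^\tau(x_{i-1})\})$, which is equivalent to your section setup), establish transversality by showing that $H$-variations produce arbitrary tangent vectors at the orbit points, and read off the Fredholm index as $N + 2d - 2d = N$. The paper's writeup is terser than your plan---in particular it does not dwell on the localization of the $H$-perturbation or the derivative loss---but the argument is the same.
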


For any Hamiltonian $H$ and any periodic orbit $S = \{x_1, \ldots, x_d\}$ of $\phi^H$, denote by 
$$\rho(S, H): T_{x_1}\Sigma \to T_{x_1}\Sigma$$
the return map of the orbit $S$. Note that $\rho(S, H)$ preserves the symplectic form $\omega$ on the vector space $T_{x_1}\Sigma$. It follows that $\phi^H$ is nondegenerate if and only if, for every simple orbit $S$, the return map $\rho(S, H)$ does not have a root of unity as an eigenvalue. Define $\mathcal{M}_{\text{bad}}^l$ to be the set of points $(H, \tau, S) \in \mathcal{M}^l$ for which the return map $\rho(S, H^\tau)$ has a root of unity as an eigenvalue. For any $d \geq 1$, we denote by $\mathcal{M}^{l,d}_{\text{bad}}$ the connected component of $\mathcal{M}^{l}_{\text{bad}}$ consisting of tuples $(H, \tau, S)$ with $|S| = d$. 

\begin{lem}\label{lem:parametric3}
For any $l \geq 3$ and $d \geq 1$, the space $\mathcal{M}^{l,d}_{\text{bad}}$ is a countable union of $C^{l - 2}$ Banach submanifolds of $\mathcal{M}^{l,d}$ of codimension at least $1$. 
\end{lem}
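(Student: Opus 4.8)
The plan is to realize $\mathcal{M}^{l,d}_{\text{bad}}$ as a countable union of regular level sets of a single $C^{l-2}$ function on $\mathcal{M}^{l,d}$, together with two extra submanifolds that absorb the locus where that function fails to be a submersion. First I would reduce the eigenvalue condition to a trace condition: since $\Sigma$ is an oriented surface, for $(H,\tau,S)\in\mathcal{M}^{l,d}$ with $S=\{x_1,\dots,x_d\}$ the return map $\rho(S,H^\tau)$ lies in $\mathrm{Sp}(T_{x_1}\Sigma,\omega)\cong\mathrm{SL}(2,\bR)$, so its eigenvalues are $\lambda,\lambda^{-1}$, and $\rho(S,H^\tau)$ has a root of unity as an eigenvalue if and only if $\operatorname{tr}\rho(S,H^\tau)$ lies in the countable set $\mathcal{T}=\{2\cos(2\pi q):q\in\mathbb{Q}\cap[0,\tfrac12]\}$. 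Thus, setting $T\colon\mathcal{M}^{l,d}\to\bR$, $T(H,\tau,S)=\operatorname{tr}\rho(S,H^\tau)$, we have $\mathcal{M}^{l,d}_{\text{bad}}=\bigcup_{c\in\mathcal{T}}T^{-1}(c)$. Standard ODE regularity — one derivative lost passing to the flow of the $C^l$ Hamiltonian vector field, one more to its linearization — shows that $T$ is of class $C^{l-2}$ on the $C^{l-1}$ Banach manifold $\mathcal{M}^{l,d}$.

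Next comes the key computation, the variation of $T$ under a localized Hamiltonian perturbation. Fix $(H_0,\tau_0,S_0)\in\mathcal{M}^{l,d}$ and a small embedded ball $B\ni x_1$ disjoint from $x_2,\dots,x_d$ (possible since $S_0$ is simple). For $G\in C^\infty(\Sigma)$ supported in $B$ with $G(x_1)=0$ and $dG(x_1)=0$, the Hamiltonian diffeomorphism $\chi_s=\phi^1_{sG}$ fixes every point of $S_0$, so $S_0$ remains a periodic orbit of $\psi_s:=\chi_s\circ\phi^{H_0^{\tau_0}}$. Using that $\phi$ is area preserving one checks $\psi_s=\phi^{H_s^{\tau_0}}$ for $H_s^{\tau_0}=(s\,G\circ\phi)\,\#\,H_0^{\tau_0}$, and extending by a $\tau$-cutoff equal to $1$ near $\tau_0$ gives a curve $H_s$ in $C^l([0,1]^N\times\bR/\bZ\times\Sigma)$ through $H_0$ with $(H_s,\tau_0,S_0)\in\mathcal{M}^{l,d}$. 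Since $\chi_s$ is the identity near $x_2,\dots,x_d$, the return map along this curve is $\rho_s=D\chi_s(x_1)\circ\rho_0$; in Darboux coordinates at $x_1$ one has $\tfrac{d}{ds}\big|_0 D\chi_s(x_1)=J\cdot\operatorname{Hess}_{x_1}(G)$, which ranges over all of $\mathfrak{sp}(2,\bR)=\mathfrak{sl}(2,\bR)$ as $G$ varies. Hence $dT$ applied to the corresponding tangent vector of $\mathcal{M}^{l,d}$ equals $\operatorname{tr}(B\rho_0)$ with $B\in\mathfrak{sl}(2,\bR)$ arbitrary. Because the trace form on $2\times 2$ matrices is nondegenerate with $\mathfrak{sl}(2,\bR)^\perp=\bR\cdot\mathrm{Id}$, there is a $B$ with $\operatorname{tr}(B\rho_0)\neq0$ unless $\rho_0\in\mathrm{SL}(2,\bR)\cap\bR\cdot\mathrm{Id}=\{\pm\mathrm{Id}\}$.

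Finally I would assemble the stratification. At every point of $\mathcal{M}^{l,d}$ with $\rho\neq\pm\mathrm{Id}$, $dT$ is onto $\bR$; a surjection onto a finite-dimensional space has complemented kernel, so the implicit function theorem for Banach manifolds makes $T^{-1}(c)\cap\{\rho\neq\pm\mathrm{Id}\}$ a $C^{l-2}$ submanifold of codimension $1$ for each $c\in\mathcal{T}$. On the complementary locus, trivialize $T\Sigma$ near $x_1$ so that the return map becomes a $C^{l-2}$ map $R$ into $\mathrm{SL}(2,\bR)$ on a chart of $\mathcal{M}^{l,d}$; the computation above, now with $\rho_0=\pm\mathrm{Id}$ (so $\tfrac{d}{ds}\big|_0\rho_s=\pm B$ sweeps $\mathfrak{sl}(2,\bR)=T_{\pm\mathrm{Id}}\mathrm{SL}(2,\bR)$), shows $R$ is submersive along $R^{-1}(\pm\mathrm{Id})$, so $\{\rho=\mathrm{Id}\}$ and $\{\rho=-\mathrm{Id}\}$ are $C^{l-2}$ submanifolds of codimension $3$. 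Therefore
$$\mathcal{M}^{l,d}_{\text{bad}}=\{\rho=\mathrm{Id}\}\ \cup\ \{\rho=-\mathrm{Id}\}\ \cup\ \bigcup_{c\in\mathcal{T}}\big(T^{-1}(c)\cap\{\rho\neq\pm\mathrm{Id}\}\big),$$
a countable union of $C^{l-2}$ submanifolds of codimension $\geq1$ (covering each locally closed piece by countably many coordinate balls if one insists on genuinely embedded closed submanifolds).

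The step I expect to fight with is precisely the degeneration at $\rho=\pm\mathrm{Id}$: there the single trace equation is no longer transverse, and the argument only goes through because the \emph{same} family of perturbations simultaneously exhibits transversality of the full return map $R$ at those points, placing the residual locus in codimension $3$. The remaining ingredients — verifying $H_s^{\tau_0}=(s\,G\circ\phi)\#H_0^{\tau_0}$ really produces $\chi_s\circ\phi^{H_0^{\tau_0}}$, the joint $C^{l-2}$ regularity of $R$ and $T$, and promoting the one-slice perturbation to an honest tangent vector of $\mathcal{M}^{l,d}$ via the $\tau$-cutoff — are routine and parallel \cite[\S4]{Irie21}.
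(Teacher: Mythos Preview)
Your argument is correct and rests on the same core perturbation as the paper's proof: both construct a one-parameter family of Hamiltonians supported near $x_1$ whose effect on the linearized return map is multiplication by an arbitrary one-parameter subgroup of $\mathrm{Sp}(T_{x_1}\Sigma)$. The packaging differs. The paper sets up a fiber bundle $\mathcal{E}\to\mathcal{M}^{l,d}$ with fiber $\mathrm{Sp}(T_{x_1}\Sigma)$, takes the return map as a section $\alpha$, and observes that the perturbation makes $\alpha$ a \emph{submersion} onto the fiber; transversality to every sub-bundle $\mathcal{E}_\zeta$ (the locus with eigenvalue $\zeta$) then follows at once, with the stratification of the singular fibers $\mathcal{E}_{\pm1}$ absorbed into the blanket statement that each $\mathcal{E}_\zeta$ is a finite union of submanifold sub-bundles. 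You instead pass through the trace $T=\operatorname{tr}\rho$, reducing $\mathcal{M}^{l,d}_{\text{bad}}$ to countably many level sets $T^{-1}(2\cos 2\pi q)$, and then must treat $\rho=\pm\mathrm{Id}$ separately because $dT$ vanishes there. Your route is more elementary and makes the codimension-$3$ stratum at $\pm\mathrm{Id}$ explicit; the paper's is more uniform since the single submersion statement covers all roots of unity without a case split. Either way the essential input is the same surjectivity of the linearized perturbation onto $\mathfrak{sp}(2,\bR)$.
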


\subsubsection{Proof assuming the lemmas} We prove Lemma \ref{lem:parametric} assuming Lemmas \ref{lem:parametric1}, \ref{lem:parametric2}, and \ref{lem:parametric3}. For any integer $d \geq 1$ and $\delta > 0$, write $E(d, \delta)$ for the set of Hamiltonians in $C^\infty([0,1]^N \times \bR/\bZ \times \Sigma)$ such that, for any $H \in E(d, \delta)$, 
\begin{equation} \label{eq:parametric1} \text{measure}(\{\tau \in [0,1]^N\,|\,\text{$\phi^\tau$ is $d$-nondegenerate}\}) > 1 - \delta. \end{equation}

We claim that $E(d, \delta)$ is open and dense in $C^\infty([0,1]^N \times \bR/\bZ \times \Sigma)$. It is clearly open. To show denseness, we use Lemma \ref{lem:parametric1}. For any $l \geq 3$, Lemma \ref{lem:parametric1} implies that the set of $H \in C^l([0,1]^N \times \bR/\bZ \times \Sigma)$ satisfying (\ref{eq:parametric1}) is open and dense. Since $C^\infty([0,1]^N \times \bR/\bZ \times \Sigma)$ is dense in $C^l([0,1]^N \times \bR/\bZ \times \Sigma)$ for every $l \geq 3$, we conclude that $E(d, \delta)$ is dense in $C^l([0,1]^N \times \bR/\bZ \times \Sigma)$ for every $l \geq 3$. It follows that it is dense in $C^\infty([0,1]^N \times \bR/\bZ \times \Sigma)$. 

Fix a sequence $d_N \to \infty$ and a sequence $\delta_N \to 0$. Then the set
$$E = \bigcap_N E(d_N, \delta_N)$$
is the generic set desired by Lemma \ref{lem:parametric}. 

\subsubsection{Proof of Lemma \ref{lem:parametric1}} We prove Lemma \ref{lem:parametric1} assuming Lemmas \ref{lem:parametric2} and \ref{lem:parametric3}. Fix $l \geq 3$ and $d \geq 1$. Introduce the notation 
$$\Pi: \mathcal{M}^{l,d} \to C^l([0,1]^N \times \bR/\bZ \times \Sigma)$$
for the projection.

Lemmas \ref{lem:parametric2} and \ref{lem:parametric3}, along with the Sard-Smale theorem implies that there is a generic set $E_d \subset C^l([0,1]^N \times \bR/\bZ \times \Sigma)$ satisfying the following properties:
\begin{itemize}
    \item For any $H \in E_d$, the preimage $\Pi^{-1}(H)$ is a manifold of class $C^{l-1}$ with dimension $N$. 
    \item For any $H$ in $E_d$, the preimage $\Pi^{-1}(H) \cap \mathcal{M}^{l,d}_{\text{bad}}$ is a countable union of submanifolds of class $C^{l-2}$ of $\Pi^{-1}(H)$ with codimension at least $1$. 
\end{itemize}

Fix any $H \in E_d$. Sard's theorem implies that there is a full measure set of $\tau \in [0,1]$ which are simultaneously regular values of the projections $\Pi^{-1}(H) \to [0,1]^N$ and $\Pi^{-1}(H) \cap \mathcal{M}^{l,d}_{\text{bad}} \to [0,1]^N$. For each regular value $\tau$, the preimage of $\tau$ under this projection does not intersect $\Pi^{-1}(H) \cap \mathcal{M}^{l,d}_{\text{bad}}$ since it is a union of submanifolds of dimension $\leq N - 1$. It follows by definition that, for any $H \in E_d$, $\phi^\tau$ is nondegenerate for a full measure set of $\tau \in [0,1]^N$. Taking the intersection of the sets $E_d$ across all $d$ yields the generic set desired by Lemma \ref{lem:parametric1}. 

\subsubsection{Proof of Lemma \ref{lem:parametric2}} Fix $l \geq 3$ and $d \geq 1$. We give $\mathcal{M}^{l,d}$ the structure of a Banach manifold using the Banach manifold implicit function theorem. Let $\Delta^d \subset \Sigma^d$ denote the ``thick diagonal'' consisting of tuples of points $\{x_1, \ldots, x_d\}$ such that $x_i = x_j$ for some $i \neq j$. Define a map
$$\Psi: C^l([0,1]^N \times \bR/\bZ \times \Sigma) \times [0,1]^N \times (\Sigma^d \setminus \Delta^d) \to (\Sigma^d)^2$$
sending a tuple $(H, \tau, S = \{x_1, \ldots, x_d\}$) to the pair of sets
$$(\{x_1, \ldots, x_d\}, \{\phi^\tau(x_d), \phi^\tau(x_1), \ldots, \phi^\tau(x_{d-1})\}.$$

Write $Z \subset (\Sigma^d)^2$ for the diagonal. Then by definition, $\mathcal{M}^{l,d} = \Psi^{-1}(Z)$. The linearization $D\Psi$ sends a variation $h$ in the $H$ direction at a point $(H, \tau, S = \{x_1, \ldots, x_d\}) \in \mathcal{M}^{l,d}$ to the pair 
$$(\{0, \ldots, 0\}, \{V_h(x_d), V_h(x_1), \ldots, V_h(x_{d-1})\}) \in \Big(\bigoplus_{i=1}^d T_{x_i}\Sigma\Big)^2$$
where $V_h = \partial_t|_{t = 0} (\phi \circ \phi^1_{H^\tau + th^\tau})$. We can choose $h$ so that $$\{V_h(x_d), V_h(x_1), \ldots, V_h(x_{d-1})\} \in \bigoplus_{i=1}^d T_{x_i}\Sigma$$ is any tuple of tangent vectors. It follows that the map $\Psi$ is transverse to the diagonal $Z$. The implicit function theorem for Banach manifolds implies that $\mathcal{M}^{l,d}$ is a Banach submanifold of class $C^{l-1}$ and codimension $2d$ in $C^l([0,1]^N \times \bR/\bZ \times \Sigma) \times [0,1]^N \times (\Sigma^d \setminus \Delta^d)$.

The projection
$$C^l([0,1]^N \times \bR/\bZ \times \Sigma) \times [0,1]^N \times (\Sigma^d \setminus \Delta^d) \to C^l([0,1]^N \times \bR/\bZ \times \Sigma)$$
is Fredholm and $C^l$ of index $N + 2d$, from which we conclude that the projection
$$\mathcal{M}^{l,d} \to C^l([0,1]^N \times \bR/\bZ \times \Sigma)$$
is Fredholm and $C^{l-1}$ of index $N$. 

\subsubsection{Proof of Lemma \ref{lem:parametric3}} \label{subsubsec:parametric3} Fix $l \geq 3$ and $d \geq 1$. We define a universal fiber bundle $\mathcal{E} \to \mathcal{M}^{l,d}$ of class $C^{l-1}$ by setting its fiber at $(H, \tau, S = \{x_1, \ldots, x_d\})$ to equal the Lie group of linear symplectic automorphisms of $T_{x_1}\Sigma$. 

For any root of unity $\zeta$, define a fiber bundle $\mathcal{E}_{\zeta} \subset \mathcal{E}$ whose fibers consist of linear symplectic automorphisms which have $\zeta$ as an eigenvalue. It follows that for any $\zeta$, $\mathcal{E}_{\zeta}$ is a finite union of sub-bundles of $\mathcal{E}$, each with fibers given by submanifolds of codimension at least $1$. When $\zeta \not\in \{1, -1\}$, $\mathcal{E}_\zeta$ is a genuine codimension $1$ subbundle. 

There is a natural $C^{l-1}$ section $\alpha: \mathcal{M}^{l,d} \to \mathcal{E}$ taking $(H, \tau, S)$ to the return map $\rho(S, H^\tau)$. The lemma follows from showing that $\alpha$ is transverse to each of the $\mathcal{E}_{\zeta}$; it suffices to consider variations in the $H$ factor to achieve this. 

Fix any $(H, \tau, S = \{x_1, \ldots, x_d\})$ and let $M \in \mathfrak{sp}(T_{x_1}\Sigma)$ be any element of the Lie algebra of $\mathcal{E}(H, \tau, S)$. It is straightforward to construct a one-parameter family $(H^{M,s})_{s \in [0,1]}$ of Hamiltonians satisfying the following properties:
\begin{itemize}
    \item $H^{M,0} = H^\tau$.
    \item For any $s \in [0,1]$, $H^{M,s} - H^\tau$ is supported in a small neighborhood of $x_1$ disjoint from any other point in $S$ and vanishes to first order on $x_1$. 
    \item For any $s \in [0,1]$, $\rho(S, H^{M,s}) = \rho(S, H^\tau) \circ \text{exp}(sM)$. 
\end{itemize}

The construction proceeds roughly as follows. Take local symplectic coordinates centered at $x_1$. There is a smooth family of Hamiltonians $(H^s)_{s \in [0,1]}$ which are supported in a neighborhood of $x_1$ disjoint from any other point in $S$ and have time-one maps which equal the linear symplectic map $\text{exp}(sM)$ near $x_1$ in the symplectic coordinates. We can also assume $H^0 \equiv 0$. Letting 
$$(K_1 \# K_2)(t, x) = K_1(t, x) + K_2(t, (\phi^t_{K_1})^{-1}(x))$$
be the composition operation for Hamiltonians, we set $H^{M,s} = H^\tau \# H^s$. Then by definition of the composition operation, the time-one map of $H^{M,s}$ is equal to $\phi^1_{H^\tau} \circ \phi^1_{H^s}$. As a result, the family $(H^{M,s})_{s \in [0,1]}$ satisfies the properties above. 

Applying the construction above for all $M$ not contained in the tangent space to $\mathcal{E}_\zeta(H, \tau, S) \subset \mathcal{E}(H, \tau, S)$ at $\alpha(H, \tau, S)$ implies that $\alpha$ is transverse to $\mathcal{E}_\zeta$ as required.  

\subsection{Proof of Lemma \ref{lem:nonDegenerateAfterSmallPerturbation}} Lemma \ref{lem:nonDegenerateAfterSmallPerturbation} follows from the same construction as the one used to prove Lemma \ref{lem:parametric3} above. For any Hamiltonian $H$, an orbit $S = \{x_1, \ldots, x_d\}$, and $M \in \mathfrak{sp}(T_{x_1}\Sigma)$, there is a smooth family of Hamiltonians $(H^{M,s})_{s \in [0,1]}$ such that $H = H^{M,s}$, $H^{M,s} - H$ is compactly supported in a neighborhood of $x_1$, equals $0$ on $x_1$, and the return maps satisfy 
$$\rho(S, H^{M,s}) = \rho(S, H) \circ \text{exp}(sM).$$

It follows with an appropriate choice of $M$ and $s \ll 1$, we can construct arbitrarily $C^\infty$-small perturbations of $H$, supported near $S$, whose time-one maps have $S$ as a nondegenerate simple periodic orbit. It follows that there are $C^\infty$-small perturbations making any finite collection of simple periodic orbits nondegenerate. This proves Lemma \ref{lem:nonDegenerateAfterSmallPerturbation}. 

\subsection{Proof of Lemma \ref{lem:derivatives}} Fix $N \geq 1$, $\tau^0 \in (0,1)^N$, the data set $\bfS = (\phi, \Gamma, \Theta_{\text{ref}})$, and $\sigma \in \TWPFH(\phi, \Gamma, \Theta_{\text{ref}})$ as in the statement of the lemma. Let $d$ denote the degree of the class $\Gamma$.

For any $\tau \in [0,1]^N$ and any orbit set $\cO \in \cP_{\bR}(\phi^\tau)$, we pull back by the map $M_{H^\tau}$ to think of $\Theta_{\cO}$ as a set of embedded loops with multiplicity in $M_\phi$; each loop is a simple periodic orbit of the vector field $R + X^{H^\tau}$ where $R$ is the canonical vector field on $M_\phi$ and $X^{H^\tau}$ is the time-dependent Hamiltonian vector field of $H^\tau$, considered as a vector field on $M_\phi$ in the natural way. A class $W \in H_2(M_{\phi^\tau}, \Theta_{\cO}, M_{H^\tau}(\Theta_{\text{ref}}); \bZ)$ can be thought of as a class in $H_2(M_\phi, \Theta_{\cO}, \Theta_{\text{ref}}; \bZ)$. We also note that the pullback of the two-form $\omega_{\phi^\tau}$ by $M_{H^\tau}$ is equal to $\omega_\phi + dH^\tau \wedge dt$. 

\subsubsection{Using the nondegeneracy hypothesis} We prepare for the proof of Lemma \ref{lem:derivatives} by writing down several consequences of the fact that $\phi^{\tau^0}$ is nondegenerate.

The spectrality property (Proposition \ref{prop:spectrality}) and the fact that $\phi^{\tau^0}$ is nondegenerate ensures that there is a finite, maximal set of orbit sets
$$\{\cO^1, \ldots, \cO^J\} \in \cP_{\bZ}(\phi^\tau)$$
and corresponding classes 
$$W^i \in H_2(M_\phi, \Theta_{\cO^i}, \Theta_{\text{ref}}; \bZ)$$
for each $i \in \{1, \ldots, J\}$ such that 
$$\bfA(\cO^i, W^i) = \int_{W^i} \omega_\phi + dH^{\tau^0} \wedge dt = c_\sigma(H^{\tau^0}; \bfS)$$
for each $i \in \{1, \ldots, J\}$. Since $\phi^{\tau^0}$ is nondegenerate, there is an open neighborhood $V$ of $\tau^0$ in $(0, 1)^N$ and a set of families of integral orbit sets
$$\{\{\cO^1_\tau\}_{\tau \in V}, \ldots, \{\cO^J_\tau\}_{\tau \in V}\}$$
satisfying the following properties:
\begin{itemize}
    \item $\cO^i_\tau \in \cP_{\bZ}(\phi^\tau)$ for every $\tau \in V$.
    \item $\cO^i_{\tau^0} = \cO^i$ for every $i \in \{1, \ldots, J\}$.
    \item $\cO^i_\tau$ is smooth as a function of $\tau \in V$ for every $i \in \{1, \ldots, J\}$. 
\end{itemize}

In the third point, the orbit sets $\cO^i_\tau$ are smooth as a function of $\tau \in V$ if they decompose as a sum
$$\cO^i_\tau = \sum_{j=1}^n a^i_j S^i_{j,\tau}$$
where the coefficients $a^i_j$ and periods $|S^i_{j,\tau}|$ are independent of $\tau$ and the orbits $S^i_{j,\tau}$ vary smoothly in $\tau$. Each family $\{\cO^i_\tau\}_{\tau \in V}$ has a unique associated family $\{W^i_\tau\}_{\tau \in V}$ satisfying the following properties:
\begin{itemize}
    \item $W^i_\tau \in H_2(M_\phi, \Theta_{\cO^i_\tau}, \Theta_{\text{ref}}; \bZ)$ for every $\tau \in V$.
    \item $W^i_{\tau^0} = W^i$ for every $i \in \{1, \ldots, J\}$. 
    \item $W^i_\tau$ is continuous as a function of $\tau \in V$ for every $i \in \{1, \ldots, J\}$ in the sense that for every closed $2$-form $\alpha$ on $M_\phi$, the function 
    $$\tau \mapsto \int_{W^i_\tau} \alpha$$
    on $V$ is continuous.
\end{itemize}

The class $W^i_{\tau^0 + \tau}$ is constructed as follows for any fixed $\tau$ in a neighborhood of the origin. For $s \in [0,1]$, the family of orbit sets $\{\Theta_{\cO^i_{\tau^0 + s\tau}}\}_{s \in [0,1]}$ can be regarded as a set of smooth immersed surfaces, denoted by $\Sigma^i_\tau$, in $M_\phi$ with positive integer multiplicities with boundary given by $\Theta_{\cO^i_{\tau^0 + \tau}} - \Theta_{\cO^i}$. Then $W^i_{\tau^0 + \tau}$ is the sum of $W^i$ and $\Sigma^i_\tau$. 

We now show that the spectral invariants $c_\sigma(H^\tau; \bfS)$ for $\tau$ near $\tau^0$ are each recovered by $\bfA(\cO^i_\tau, W^i_\tau)$ for some $i \in \{1, \ldots, J\}$. 

\begin{lem} \label{lem:spectrality}
There is some open neighborhood of $\tau_0$ in $(0,1)^N$ such that if $\tau$ lies in this neighborhood, then
$$c_\sigma(H^\tau; \bfS) \in \{\bfA(\cO^1_\tau, W^1_\tau), \ldots, \bfA(\cO^J_\tau, W^J_\tau)\}.$$
\end{lem}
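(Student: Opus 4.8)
The plan is to use the spectrality property (Proposition~\ref{prop:spectrality}) together with a continuity/compactness argument. First I would note that by Proposition~\ref{prop:spectrality}, for every $\tau$ near $\tau^0$ there exists \emph{some} integral orbit set $\cO$ of $\phi^\tau$ with $[\cO] = \Gamma$ and a class $W \in H_2(M_\phi, \Theta_{\cO}, \Theta_{\text{ref}}; \bZ)$ realizing $c_\sigma(H^\tau; \bfS) = \bfA(\cO, W)$. The content of the lemma is that, \emph{locally uniformly in $\tau$}, this realizing generator can be taken from the finitely many smooth families $\{(\cO^i_\tau, W^i_\tau)\}_{i=1}^J$ that continue the realizing generators at $\tau^0$. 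I expect this to follow from a ``compactness of low-action generators'' statement: since $\phi^{\tau^0}$ is nondegenerate, there are only finitely many PFH generators $(\cO, W)$ of $\phi^{\tau^0}$ with action $\le c_\sigma(H^{\tau^0}; \bfS) + 1$, and in a small enough neighborhood $V$ of $\tau^0$ the same is true of $\phi^\tau$, with each such generator lying in one of the smooth families.

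In more detail, the key steps in order are: (1) Shrink $V$ so that $\phi^\tau$ is nondegenerate for all $\tau \in V$ (using that nondegeneracy is an open condition and $\phi^{\tau^0}$ is nondegenerate), and so that the orbits of $\phi^{\tau^0}$ of period $\le d$ persist as smooth families over $V$ with no bifurcations — this is exactly the implicit-function-theorem setup already used to produce the families $\{\cO^i_\tau\}$ in the paragraph before the lemma, but now applied to \emph{all} orbits of period $\le d$, not just those appearing in the $\cO^i$. (2) Argue that the set of PFH generators $(\cO_\tau, W_\tau)$ of $\phi^\tau$ with $\bfA(\cO_\tau, W_\tau) \le c_\sigma(H^{\tau^0}; \bfS) + 1$ is, after shrinking $V$, in bijective correspondence (via the persistence of orbits) with the corresponding finite set for $\phi^{\tau^0}$; here one uses that the action $\bfA(\cO_\tau, W_\tau) = \int_{W_\tau}(\omega_\phi + dH^\tau \wedge dt)$ depends continuously on $\tau$ (by the description of $W_\tau$ as $W$ plus a small swept-out surface), so the action gap separating this finite set from the rest of the spectrum persists. (3) Use the Hofer--Lipschitz continuity (Proposition~\ref{prop:hoferContinuity}, or rather the fact that $c_\sigma(H^\tau;\bfS)$ is continuous in $\tau$) to conclude that after shrinking $V$ once more, $c_\sigma(H^\tau;\bfS) < c_\sigma(H^{\tau^0};\bfS) + 1$, so the generator realizing $c_\sigma(H^\tau;\bfS)$ guaranteed by Proposition~\ref{prop:spectrality} lies in this finite persistent set. (4) Observe that a realizing generator must in fact continue \emph{some} $(\cO^i, W^i)$: a generator $(\cO_{\tau}, W_{\tau})$ with minimal action among those representing $\sigma$ specializes at $\tau^0$ to a generator of action $\le c_\sigma(H^{\tau^0};\bfS)$; but by definition of the spectral invariant and nondegeneracy at $\tau^0$, any generator appearing in a cycle representing $\sigma$ with action $\le c_\sigma(H^{\tau^0};\bfS)$ has action exactly $c_\sigma(H^{\tau^0};\bfS)$ — so it is one of the $(\cO^i, W^i)$, hence $(\cO_\tau, W_\tau) = (\cO^i_\tau, W^i_\tau)$.

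The main obstacle I anticipate is step~(4): making precise the claim that the generator of $\phi^\tau$ realizing the spectral invariant must be the $\tau$-continuation of a generator realizing the spectral invariant at $\tau^0$, rather than merely some generator of action $\le c_\sigma(H^{\tau^0};\bfS)$ that does not lie in a cycle representing $\sigma$. Resolving this cleanly likely requires invoking that the spectral invariant is \emph{itself} continuous in $\tau$ (so it cannot jump down), combined with the fact that at the nondegenerate parameter $\tau^0$ the value $c_\sigma(H^{\tau^0};\bfS)$ is an isolated point of the action spectrum lying strictly above every other generator that could conceivably appear — so by continuity of actions and of $c_\sigma$, for $\tau \in V$ the value $c_\sigma(H^\tau;\bfS)$ stays in a small interval around $c_\sigma(H^{\tau^0};\bfS)$ that contains the actions of the $(\cO^i_\tau, W^i_\tau)$ and no other persistent generator. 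A secondary technical point is ensuring the bijection in step~(2) really does identify actions up to the relevant error, which is where the explicit formula for $W^i_\tau$ as $W^i$ plus the swept surface $\Sigma^i_\tau$, and the continuity of $\tau \mapsto \int_{W^i_\tau}\alpha$, do the work.
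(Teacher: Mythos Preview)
Your proposal is correct once you commit to the resolution in your final paragraph, and it amounts to a direct unpacking of the same mechanism the paper establishes by contradiction. The paper's argument is shorter: suppose the conclusion fails along a sequence $\tau_i \to \tau^0$; by spectrality (Proposition~\ref{prop:spectrality}) pick realizing generators $(\cO_i, W_i)$ for each $\phi^{\tau_i}$; Hofer continuity (Proposition~\ref{prop:hoferContinuity}) gives $c_\sigma(H^{\tau_i};\bfS)\to c_\sigma(H^{\tau^0};\bfS)$, and a compactness argument extracts a subsequential limit $(\cO, W)$ which is a generator for $\phi^{\tau^0}$ of action exactly $c_\sigma(H^{\tau^0};\bfS)$, hence equal to some $(\cO^j, W^j)$ by maximality of the list; nondegeneracy at $\tau^0$ then forces $(\cO_i, W_i) = (\cO^j_{\tau_i}, W^j_{\tau_i})$ for large $i$, a contradiction. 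Your direct route---isolate $c_\sigma(H^{\tau^0};\bfS)$ in the action spectrum and use continuity of both $c_\sigma$ and the persistent action values---reaches the same conclusion but requires explicitly setting up and controlling the nearby spectrum, which the paper's sequential compactness argument sidesteps.

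Two small corrections to your earlier steps. In step~(1), full nondegeneracy is \emph{not} an open condition; only $d$-nondegeneracy is, and that is all you need since only orbit sets with $|\cO|=d$ enter. In step~(2), the set of generators with action bounded only from above is infinite (the class $W$ ranges over a lattice), so you must bound the action in an \emph{interval} about $c_\sigma(H^{\tau^0};\bfS)$; monotonicity of $\phi$ makes the periods of $\omega_\phi$ discrete, which is what gives finiteness in such an interval. Finally, your initial formulation of step~(4)---that a realizing generator at $\tau$ specializes to one appearing in a representing cycle at $\tau^0$, and that any such generator has action exactly $c_\sigma(H^{\tau^0};\bfS)$---is incorrect as stated (a representing cycle can contain generators of strictly smaller action) and is not needed once you use the isolation-plus-continuity argument you describe afterward.
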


\begin{proof}
Assume for the sake of contradiction that the conclusion of the lemma is false. Then there is a sequence $\tau_i \to \tau^0$ such that, for every $i$, 
$$c_\sigma(H^{\tau_i}; \bfS) \not\in \{\bfA(\cO^1_{\tau_i}, W^1_{\tau_i}), \ldots, \bfA(\cO^J_{\tau_i}, W^J_{\tau_i})\}.$$

Proposition \ref{prop:hoferContinuity} (Hofer continuity) implies that
$$\lim_{i \to \infty} c_\sigma(H^{\tau_i}; \bfS) = c_\sigma(H^{\tau^0}; \bfS).$$

Proposition \ref{prop:spectrality} (spectrality) implies that there is a sequence $\{(\cO_i, W_i)\}$ such that, for every $i$, $(\cO_i, W_i)$ is a PFH generator for $\phi^{\tau_i}$ and $\bfA(\cO_i, W_i) = c_\sigma(H^{\tau_i}; \bfS)$. 

A compactness argument shows, after passing to a subsequence, there is a PFH generator $(\cO, W)$ for $\phi^{\tau^0}$ such that $\cO_i \to \cO$ and $W_i \to W$. The former convergence is convergence as formal linear combinations of ordered sets of points in $\Sigma$, and the latter convergence is convergence of functionals on closed $2$-forms. We conclude that
$$\bfA(\cO, W) = \lim_{i \to \infty} \bfA(\cO_i, W_i) = c_\sigma(H^{\tau^0}; \bfS).$$

Therefore, there is some $j \in \{1, \ldots, J\}$ such that $\cO = \cO^j$ and $W = W^j$. Since $\phi^{\tau^0}$ is nondegenerate, we conclude that for sufficiently large $i$, we must have that $\cO_i = \cO^j_{\tau_i}$ and $W_i = W^j_{\tau_i}$. This yields a contradiction, which proves the lemma. 
\end{proof}

\subsubsection{The computation} Retain the notation for the neighborhood $V$ of $\tau^0$ and the smooth families $(\cO^i_\tau)_{\tau \in V}$, $(W^i_\tau)_{\tau \in V}$ introduced above. Fix any vector $\tau \in \bR^N$ in the open ball of radius $\delta$ around the origin, where $\delta$ is the constant from Lemma \ref{lem:spectrality}. We will complete the proof of Lemma \ref{lem:derivatives} by showing that for any $i \in \{1, \ldots, J\}$, 
\begin{equation} \label{eq:derivatives1} \partial_s|_{s = 0} \bfA(\cO^i_{\tau^0 + s\tau}, W^i_{\tau^0 + s\tau}) = (\int_{\Theta_{\cO^i}} - \int_{\Theta_{\text{ref}}}) (\partial_s|_{s = 0}H^{\tau^0 + s\tau} dt) \end{equation}
for any choice of $\tau$. This completes the proof of Lemma \ref{lem:derivatives} since $c_\sigma(H^{\tau}; \bfS)$ is differentiable at $\tau^0$, and by Lemma \ref{lem:spectrality} it is equal to one of the actions $\bfA(\cO^i_{\tau}, W^i_\tau)$ for any $\tau$ near $\tau^0$. 

We use Stokes' theorem to compute the action:
\begin{equation} \label{eq:derivatives2}
\begin{split} 
\bfA(\cO^i_{\tau^0 + s\tau}, W^i_{\tau^0 + s\tau}) &= \int_{W^i_{\tau^0 + s\tau}} (\omega_\phi + dH^{\tau^0 + s\tau} \wedge dt) \\
&= \int_{W^i_{\tau^0 + s\tau}} \omega_\phi + dH^{\tau^0} \wedge dt + \int_{\Theta_{\cO^i_{\tau^0 + s\tau}}} (H^{\tau^0 + s\tau} - H^{\tau^0})dt \\
&\qquad - \int_{\Theta_{\text{ref}}} (H^{\tau^0 + s\tau} - H^{\tau^0})dt.
\end{split}
\end{equation}

We now break up the last line of (\ref{eq:derivatives2}) into pieces and compute the $s$-derivative on each piece. First, we compute
\begin{equation}
    \label{eq:derivatives3}
    \begin{split}
    & \partial_s|_{s = 0}(\int_{\Theta_{\cO^i_{\tau^0 + s\tau}}} H^{\tau^0 + s\tau} dt - \int_{\Theta_{\text{ref}}} H^{\tau^0 + s\tau} dt) - \partial_s|_{s = 0}\big(\int_{\Theta_{\cO^i_{\tau^0 + s\tau}}} H^{\tau^0} dt\big)\\
    &\qquad = (\int_{\Theta_{\cO^i}} - \int_{\Theta_{\text{ref}}}) (\partial_s|_{s = 0} H^{\tau^0 + s\tau} dt).  
    \end{split}
\end{equation}

Next, we compute
\begin{equation} \label{eq:derivatives4}
\begin{split}
\partial_s|_{s = 0} \int_{W^i_{\tau^0 + s\tau}} (\omega_\phi + dH^{\tau^0} \wedge dt) &= \partial_s|_{s = 0} (\int_{W^i} + \int_{\Sigma^i_{\tau^0 + s\tau}}) (\omega_\phi + dH^{\tau^0} \wedge dt)\\
&= \int_{\Theta_{\cO^i}} (\omega_\phi + dH^{\tau^0} \wedge dt)(\partial_s|_{s = 0}\Theta_{\cO^i_{\tau^0 + s\tau}}, -) \\
&= 0.
\end{split}
\end{equation}

The second to last line uses the fact that the surface $\Sigma^i_{\tau^0 + s\tau}$ is foliated smoothly by the orbit sets along the path $s \mapsto \tau^0 + s\tau$. The last line uses the fact that $\Theta_{\cO^i}$ is tangent to the vector field $R + X_{H^{\tau^0}}$, where $R$ is the canonical vector field for $\phi$ and $X_{H^{\tau^0}}$ is the Hamiltonian vector field for $H^{\tau^0}$, which contracts with the two-form $\omega_\phi + dH^{\tau^0} \wedge dt$ to zero. Taking the $s$-derivative of (\ref{eq:derivatives2}) and plugging in (\ref{eq:derivatives3}) and (\ref{eq:derivatives4}) yields (\ref{eq:derivatives1}) and proves the lemma. 

\bibliographystyle{alpha}
\bibliography{main}

\end{document}